\def\thm@space@setup{%
  \thm@preskip=\parskip \thm@postskip=0pt
}
\numberwithin{equation}{section}
\newtheorem{thm}{Theorem}[section]
\newtheorem{prop}[thm]{Proposition}
\newtheorem{cor}[thm]{Corollary}
\newtheorem{lemma}[thm]{Lemma}
\newtheorem{conj}[thm]{Conjecture}
\newtheorem{prob}[thm]{Problem}
\theoremstyle{remark} 
\newtheorem{remark}[]{Remark}
\newcommand{\CC}{\mathbb{C}}
\newcommand{\hr}{\hat{r}}
\newcommand{\hC}{\hat{\CC}}
\newcommand{\ol}{\overline}
\newcommand{\be}{\begin{equation}}
\newcommand{\ee}{\end{equation}}
\newcommand{\bt}{\begin{theorem}}
\newcommand{\et}{\end{theorem}}
\begin{document}

\title[Valence of logharmonic polynomials]{On the valence of logharmonic polynomials}
\date{}

\author[D. Khavinson]{Dmitry Khavinson}
\address{Department of Mathematics and Statistics, University of South Florida, Tampa, FL 33620}
\email{dkhavins@usf.edu}
\thanks{The first-named author acknowledges support from the Simons Foundation (grant 513381).}

\author[E. Lundberg]{Erik Lundberg}
\address{Department of Mathematical Sciences,
Florida Atlantic University, Boca Raton, FL 33431}
\email{elundber@fau.edu}
\thanks{The second-named author acknowledges support from the Simons Foundation (grant 712397).}

\author[S. Perry]{Sean Perry}
\address{Department of Mathematics and Statistics, University of South Florida, Tampa, FL 33620}
\email{perry2@usf.edu}

\begin{abstract}
Investigating a problem posed by W. Hengartner (2000), we study the maximal valence (number of preimages of a prescribed point in the complex plane) of logharmonic polynomials, i.e., complex functions that take the form $f(z) = p(z) \overline{q(z)}$ of a product of an analytic polynomial $p(z)$ of degree $n$ and the complex conjugate of another analytic polynomial $q(z)$ of degree $m$.
In the case $m=1$, we adapt an indirect technique utilizing anti-holomorphic dynamics to show that the valence is at most $3n-1$. 
 This confirms a conjecture of Bshouty and Hengartner (2000). Using a purely algebraic method based on Sylvester resultants, we also prove a general upper bound for the valence showing that for each $n,m \geq 1$ the valence is at most $n^2+m^2$.  This improves, for every choice of $n,m \geq 1$, the previously established upper bound $(n+m)^2$ based on Bezout's theorem.
 We also consider the more general setting of polyanalytic polynomials where we show that this latter result can be extended under a nondegeneracy assumption.
\end{abstract}

\subjclass[2020]{30C55, 37F10, 13P15}

\maketitle

\section{Introduction}

The following problem was posed by W. Hengartner at the second international workshop on planar harmonic mappings at the Technion, Haifa, January 7-13, 2000.  The problem has been restated several times (see \cite{Problems}, \cite{BshoutyHengartner}, and \cite{AbdulhadiHengartner}).

\begin{prob}[Hengartner's Valence Problem]
\label{prob:Hen}
Let $f(z) = p(z) \ol{q(z)}$ be a logharmonic polynomial of degree $n = \deg p$ and $m = \deg q$ with $p$ not a constant multiple of $q$.  Find a sharp upper bound for the valence (number of preimages of a prescribed $w\in\CC$) of $f$, i.e., determine the maximal valence in terms of $n$ and $m$.
\end{prob}

The condition that $p$ is not a constant multiple of $q$ ensures that the valence is finite, see \cite[Sec. 3]{AbdulhadiHengartner}.
The term \textit{logharmonic} refers to the fact, which is apparent, that the log of such an $f$ is locally harmonic on the set excluding its zeros. 
Concerning the case $m=1$, Bshouty and Hengartner made the following conjecture in \cite{BshoutyHengartner}.

\begin{conj}[Bshouty, Hengartner, 2000]
In the case $m=1$, the maximal valence is $3n-1$.
\end{conj}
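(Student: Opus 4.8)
The plan is to establish the upper bound, namely that the valence of $f$ at every $w$ is at most $3n-1$; the matching lower bound is realized by explicit examples (see \cite{BshoutyHengartner}) and is elementary, so I concentrate on the bound. Fix $w\in\CC$. If $w=0$ the preimages lie among the zeros of $p$ and of $q$, so there are at most $n+1\le 3n-1$ of them, and if $n\le2$ then $3n-1$ already coincides with the bound $n^2+1$ furnished by the resultant theorem; hence I may assume $w\ne0$ and $n\ge3$. The first step is to recast the equation $f(z)=w$ as a fixed-point problem. Write $q(z)=a(z-\beta)$ with $a\ne0$; since $w\ne0$ forces $p(z)\ne0$ at every solution, the equation $p(z)\overline{q(z)}=w$ is equivalent to $z=E(z)$, where $E(z):=\beta+\frac{\overline w}{a\,\overline{p(z)}}$ — a Möbius transformation of $\overline{p(z)}$, nondegenerate because $w\ne0$ — is an anti-rational self-map of the Riemann sphere of degree $n\ge3$; and all its fixed points are finite, since $E(\infty)=\beta\ne\infty$. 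Thus the valence of $f$ at $w$ equals the number of fixed points of $E$.

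Next I would split these fixed points according to the sign of the Jacobian of $f$. A standard argument-principle count for the polyanalytic polynomial $f(z)-w=\bar a(\bar z-\bar\beta)p(z)-w$, whose value on a circle $|z|=\rho$ is $\bar a\,a_n\rho^2 z^{n-1}\bigl(1+O(\rho^{-1})\bigr)$ (with $a_n$ the leading coefficient of $p$), gives $N_+-N_-=n-1$, where $N_\pm$ counts the solutions at which $J_f$ is positive, respectively negative; assuming for the moment that every solution is nondegenerate, the valence is $N=N_++N_-$. From the identity $\overline{f(z)-w}=a\,\overline{p(z)}\,\bigl(z-E(z)\bigr)$, that is $f-w=\bar a\,p\cdot\overline{G}$ with $G(z)=z-E(z)$, a short computation gives at a solution $z_*$ the formula $J_f(z_*)=|\bar a\,p(z_*)|^2\bigl(|\lambda|^2-1\bigr)$, where $\lambda=E_{\bar z}(z_*)$ is the multiplier of the anti-holomorphic germ $E$ at its fixed point $z_*$. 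Consequently the $N_-$ solutions are precisely the \emph{attracting} fixed points of $E$, and the $N_+$ solutions the repelling ones.

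The heart of the matter is then the dynamical claim that an anti-rational map $E$ of degree $n\ge3$ has at most $n$ attracting fixed points. Here $E\circ E$ is rational of degree $n^2\ge2$; a fixed point of $E$ is attracting for $E$ iff it is attracting for $E\circ E$, and the basins of attraction for $E$ and for $E\circ E$ coincide, so the immediate basin $B^*$ of an attracting fixed point $z_*$ is a single Fatou component of $E\circ E$. By Fatou's theorem $B^*$ contains a critical point of $E\circ E$, and since every critical point of $E\circ E$ is either a critical point of $E$ or is sent by $E$ to one, while $E(B^*)\subseteq B^*$, the component $B^*$ in fact contains a critical point of $E$. Distinct attracting fixed points have disjoint immediate basins, so their number is at most the number of distinct critical points of $E$ on the sphere. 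Now $E$ has at most $n-1$ critical points in $\CC$ (lying over the zeros of $p$ and of $p'$) and exactly one more at $\infty$, of multiplicity $n-1$, because $E(z)\to\beta$ with local degree $n$ as $z\to\infty$; hence at most $n$ distinct critical points, so $N_-\le n$. Combining this with $N_+-N_-=n-1$ gives $N=2N_-+(n-1)\le 3n-1$.

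I expect the dynamical count to be the main obstacle, and the crucial feature is the status of the critical point at infinity. In the purely harmonic prototype $p(z)-\bar z$, whose corresponding map is $z\mapsto\overline{p(z)}$, infinity is a superattracting \emph{fixed} point, so the critical point there is absorbed into its own basin, forcing $N_-\le n-1$ and (together with winding number $n$) the Khavinson--Świątek bound $3n-2$; in our situation $E(\infty)=\beta$ is finite, the critical point at infinity is unconstrained and may be captured by the basin of a genuine finite fixed point, which is exactly what relaxes the bound to $N_-\le n$ and ultimately yields $3n-1$. The remaining work is routine but must be carried out with care: verifying $\deg E=n$ and the multiplier/Jacobian dictionary in every chart, including near the poles of $E$ over the zeros of $p$ (which are not fixed points of $E$); and discarding the temporary nondegeneracy hypothesis — parabolic fixed points of $E$ and degenerate zeros of $f-w$ — by perturbing $w$ within the nowhere-dense set of exceptional parameters and passing to the limit, taking care that no preimage is lost in the process.
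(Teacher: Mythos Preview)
Your upper-bound argument is essentially the paper's: both recast $f(z)=w$ as a fixed-point equation for an anti-rational map of degree $n$ (your $E$, the paper's $z\mapsto\overline{r(z)}$ with $r=1/p-\bar b$), bound the number of attracting fixed points by the number of \emph{distinct} critical points via an anti-holomorphic Fatou theorem, exploit the key observation that the order-$(n-1)$ critical point at $\infty$ counts only once (yielding at most $n$ rather than $2n-2$), and then combine this with an argument-principle identity and a perturbation to eliminate degenerate zeros. The cosmetic differences---you apply the argument principle directly to the polynomial $f-w$ (winding number $n-1$, no poles) whereas the paper applies it to $\overline{r(z)}-z$ (winding number $1$, poles of total order $-n$), and you derive the anti-holomorphic Fatou statement via the second iterate $E\circ E$ while the paper cites it as a lemma---lead to swapped orientation labels but the identical relation $(\text{repelling})=(\text{attracting})+n-1$ and the same final count.

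One correction: the sharpness half of the conjecture is \emph{not} settled by \cite{BshoutyHengartner}. Their general-$n$ examples attain only valence $3n-3$; the value $3n-1$ is verified there only for $n=2$, and the paper merely adds numerical evidence for $n=3,4$. So your proposal, like the paper, establishes only the upper-bound assertion of the conjecture; the claim that the matching lower bound ``is elementary'' should be withdrawn.
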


In referring to the maximal valence, this conjecture implicitly contains two assertions: (i) in the case $m=1$, for each $n \geq 1$ the valence is at most $3n-1$ and (ii) for each $n \geq 1$ there exists a polynomial $p(z)$ of degree $n$, a linear $q(z)$, and a $w \in \CC$ such that there are $3n-1$ preimages of $w$ under $f$.
In this paper, as our first main result, we prove the following theorem confirming the first assertion (i), i.e., the upper bound that is entailed in Bshouty and Hengartner's conjecture.

\begin{thm}\label{thm:BHconjUB}
Let $p$ be a polynomial of degree $n>1$ and let $q(z)$ be linear.  For each $w \in \CC$, the number of solutions of the equation $p(z)\ol{q(z)}=w$ is at most $3n-1$.
\end{thm}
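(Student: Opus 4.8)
The plan is to reduce the count to the number of fixed points of one anti-holomorphic rational map and then to argue in the spirit of the known treatment of harmonic polynomials $p(z)-\bar z$, the new ingredient being a careful count of the critical points of that map — in particular the one at $\infty$.

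\textbf{Reduction.} Writing $q(z)=a(z-c)$ with $a\neq 0$, the substitution $z\mapsto z-c$ turns $p(z)\overline{q(z)}=w$ into $\tilde p(z)\,\bar z=w$ with $\tilde p(z)=\bar a\,p(z+c)$ again of degree $n>1$; I rename $\tilde p$ as $p$. If $w=0$ the solutions are $z=0$ and the at most $n$ zeros of $p$, at most $n+1\le 3n-1$ in all; so assume $w\neq 0$. Then every solution has $p(z)\neq 0$ and $z\neq 0$, and $p(z)\bar z=w$ is equivalent to $z=\overline{R(z)}$ with $R(z):=w/p(z)$, a degree-$n$ rational self-map of $\hC$. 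Put $g(z):=\overline{R(z)}$, an orientation-reversing degree-$n$ self-map of $\hC$ with $g(\infty)=\overline{R(\infty)}=0$, so $\infty$ is not a fixed point. Since $p$, having degree $n>1$, cannot be a constant multiple of the linear $q$, the solution set is finite, and $T$ (the number of solutions) equals the number of fixed points of $g$ in $\CC$.

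\textbf{Classification and the argument principle.} The map $g\circ g$ is holomorphic rational of degree $n^2\ge 4$, and each fixed point $z_0$ of $g$ is a fixed point of $g\circ g$ with multiplier $(g\circ g)'(z_0)=|R'(z_0)|^2\ge 0$ (using $R(z_0)=\bar z_0$). Hence each solution is of exactly one type, governed by $|R'(z_0)|=|z_0|\,|p'(z_0)|/|p(z_0)|$: \emph{attracting} ($|R'(z_0)|<1$; say $A$ of them), \emph{parabolic} with multiplier $1$ ($|R'(z_0)|=1$; say $P$), or \emph{repelling} ($|R'(z_0)|>1$; say $B$), so $T=A+P+B$. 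Writing $F(z)=p(z)\bar z-w$, the Jacobian $J_F(z_0)=|p'(z_0)z_0|^2-|p(z_0)|^2$ is positive precisely at the repelling solutions and negative precisely at the attracting ones, so as zeros of $F$ these have index $+1$ and $-1$ respectively. As $F(z)=a_n|z|^2z^{\,n-1}(1+o(1))$ on large circles (with $a_n$ the leading coefficient of $p$, and $|F|\to\infty$ at $\infty$), the winding number of $F$ on a large circle is $n-1$, whence the indices of the zeros of $F$ sum to $n-1$; when $P=0$ this reads $B-A=n-1$, and the general case reduces to this by a standard perturbation of $w$.

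\textbf{The dynamical bound.} It remains to prove $A+P\le n$. By Fatou's theory (applied via $g\circ g$, whose degree $n^2\ge 2$ is where $n>1$ enters): the immediate basin of each attracting fixed point of $g$, and the parabolic basin at each parabolic fixed point of $g$, contains a critical point of $g$, and these sets are pairwise disjoint for distinct fixed points. (A fixed point of $g$ has the same basin under $g$ and under $g\circ g$; that basin is forward $g$-invariant and contains a critical point of $g\circ g$, which is either critical for $g$ or mapped by $g$ to a critical point of $g$ still lying in the basin.) But the critical points of $g$ are exactly those of $R=w/p$, namely the at most $n-1$ distinct zeros of $p'$ in $\CC$ together with the single point $\infty$ (a critical point of $R$ of local degree $n\ge 2$) — at most $n$ points. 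Hence $A+P\le n$, and combining with $B=A+n-1$ and $A\le A+P\le n$ gives
$$T=A+P+B=(A+P)+A+(n-1)\le n+n+(n-1)=3n-1.$$

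\textbf{Main obstacle.} I expect the dynamical estimate $A+P\le n$ to be the crux: one must verify that each non-repelling fixed point of the anti-holomorphic map $g$ absorbs a critical point of $g$ \emph{itself}, not merely of $g\circ g$, and one must use that the critical point of $R$ at $\infty$ — though of local degree $n$, so accounting for $n-1$ of the $2n-2$ critical points when counted with multiplicity — is a single point; dropping that observation yields only the weaker bound $5n-5$. A secondary technical matter is the reduction of the identity $B-A=n-1$ to the non-degenerate case $P=0$.
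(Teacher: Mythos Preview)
Your proof is correct and follows essentially the same approach as the paper: reformulate as a fixed-point problem for an anti-rational map, bound the non-repelling fixed points via Fatou's theorem using the key observation that the critical point at $\infty$ counts only once (yielding at most $n$ distinct critical points rather than $2n-2$), and close with the argument principle together with a perturbation to the nondegenerate case $P=0$. The only cosmetic difference is that you apply the argument principle directly to the pole-free polynomial $F(z)=p(z)\bar z-w$ (winding number $n-1$ at infinity), whereas the paper applies it to the rational harmonic function $1/\overline{p(z)}-b-z$ (winding number $1$, but with poles contributing $-n$); both routes give the same relation $B-A=n-1$.
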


Regarding the assertion (ii), i.e., the sharpness part of the conjecture, examples are provided in \cite{BshoutyHengartner} having valence $3n-3$, which shows that the above result is asymptotically sharp.  There is additionally an example presented in \cite{BshoutyHengartner} for $n=2$ with valence $3n-1=5$ showing sharpness in the case $n=2$. In Section \ref{sec:numerics} below, we present additional examples that, according to our numerical simulations, attain the upper bound $3n-1$ in the case $n=3,4$.  It is an enticing open problem to confirm the second statement (ii) implicit in Bshouty and Hengartner's conjecture by producing examples showing that Theorem \ref{thm:BHconjUB} is sharp for each $n$.

Returning to the general setting of Hengartner's Problem,
it was observed in \cite{AbdulhadiHengartner} that Bezout's Theorem gives an upper bound $(n+m)^2$ on the valence, and the authors conjectured the following.

\begin{conj}[Abdulhadi, Hengartner, 2001]
The Bezout bound fails to be sharp for every $m,n \geq 1$, i.e., the maximal valence is strictly less than $(m+n)^2$.
\end{conj}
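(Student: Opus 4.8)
Since $n^2+m^2<(n+m)^2$ for all $n,m\ge 1$, it suffices to prove the sharper statement that the valence of $f(z)=p(z)\ol{q(z)}$ is at most $n^2+m^2$ whenever $p$ is not a constant multiple of $q$. The plan is to treat $z$ and $\ol z$ as independent variables and eliminate. Writing $\ol p,\ol q$ for the polynomials obtained from $p,q$ by conjugating their coefficients, so that $\ol{p(z)}=\ol p(\ol z)$ and $\ol{q(z)}=\ol q(\ol z)$, the equation $f(z)=w$ together with its complex conjugate becomes the polynomial system
\begin{equation}
A(z,\zeta):=p(z)\,\ol q(\zeta)-w=0,\qquad B(z,\zeta):=q(z)\,\ol p(\zeta)-\ol w=0,
\end{equation}
in the sense that the preimages of $w$ under $f$ are exactly the $z\in\CC$ for which $(z,\zeta)=(z,\ol z)$ solves the system. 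One reduces immediately to $w\ne 0$: if $w=0$, the preimages of $0$ lie in the union of the zero sets of $p$ and of $q$, hence number at most $n+m\le n^2+m^2$.

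Assume $w\ne 0$ and set $R(z):=\res_\zeta\bigl(A(z,\zeta),B(z,\zeta)\bigr)$, the Sylvester resultant eliminating $\zeta$; here $A$ has $\zeta$-degree $m$ and $B$ has $\zeta$-degree $n$. The resultant is a universal polynomial in the coefficients that is homogeneous of degree $n$ in the coefficients of $A$ — each of which is a polynomial in $z$ of degree at most $n$ — and homogeneous of degree $m$ in the coefficients of $B$ — each of degree at most $m$ in $z$. This yields the crucial degree bound $\deg_z R\le n\cdot n+m\cdot m=n^2+m^2$. Moreover every preimage $z_0$ of $w$ satisfies $R(z_0)=0$: because $w\ne 0$, neither $p(z_0)$ nor $q(z_0)$ vanishes, so the $\zeta$-leading coefficients of $A(z_0,\cdot)$ and $B(z_0,\cdot)$, namely $p(z_0)\ol q_m$ and $q(z_0)\ol p_n$, are nonzero; hence $R(z_0)=\res_\zeta\bigl(A(z_0,\cdot),B(z_0,\cdot)\bigr)$, and the latter vanishes since $\zeta=\ol{z_0}$ is a common root of the two univariate polynomials $A(z_0,\cdot)$ and $B(z_0,\cdot)$. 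Therefore, provided $R\not\equiv 0$, the number of preimages is at most the number of roots of $R$, which is at most $n^2+m^2$.

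The remaining point — and the one I expect to be the main obstacle — is to show $R\not\equiv 0$; this is precisely where the hypothesis $p\ne c\,q$ enters, and it is the hypothesis with no analogue (hence the need for a nondegeneracy assumption) in the general polyanalytic setting. Equivalently, one must rule out a common factor of $A$ and $B$ of positive $\zeta$-degree, i.e. a one-dimensional component $\Gamma$ of $\{A=0\}\cap\{B=0\}$. Since $w\ne0$ and $\deg p,\deg q\ge1$, both coordinate projections of such a $\Gamma$ are nonconstant, and on $\Gamma$ one can solve the system to get $p(z)=w/\ol q(\zeta)$ and $q(z)=\ol w/\ol p(\zeta)$; this forces the polynomial plane curve $V_1:=\ol{\{(p(z),q(z)):z\in\CC\}}$ to coincide with the rational curve $V_2:=\ol{\{(w/\ol q(\zeta),\ol w/\ol p(\zeta)):\zeta\in\CC\}}$. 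A comparison of these two curves gives the contradiction: one has $\deg V_1=\max(n,m)/e$, where $e$ is the degree of the largest polynomial $g$ with $p,q\in\CC[g]$, while the degree of $V_2$ is governed by $n+m$ and by the same quantity $e$ (passing from $p,q$ to $\ol p,\ol q$ preserves $e$); for $n,m\ge1$ this is impossible unless $p$ is a constant multiple of $q$, which is excluded. Carrying out this curve comparison cleanly, accounting for common zeros of $p$ and $q$ and for the spurious roots introduced in clearing denominators, is the technical heart of the argument.

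As a sanity check one may note that $n^2+m^2$ is the coefficient of $st$ in $(ns+mt)(ms+nt)$, i.e. the bidegree Bézout number for a system whose two equations carry the bidegrees of $A$ and $B$; the resultant computation above is one concrete way to realize this bound. The bound is not optimal when $m=1$, where Theorem~\ref{thm:BHconjUB} already gives the smaller value $3n-1$ by entirely different (dynamical) means, so the resultant method should be viewed as complementary to, rather than a replacement for, the argument behind Theorem~\ref{thm:BHconjUB}.
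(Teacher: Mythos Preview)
Your resultant setup is exactly the paper's: form $P(z,\ol z)=p(z)\ol{q(z)}-w$ and its conjugate $Q$, eliminate one variable, observe that the resultant has degree at most $n\cdot n+m\cdot m=n^2+m^2$, and conclude once you know $R\not\equiv0$. The degree bound and the specialization argument showing each preimage is a root of $R$ are correct and match Section~\ref{sec:general} (Lemma~\ref{lem:PRL} and the discussion before it).

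The genuine gap is in your proposed proof of coprimacy (equivalently $R\not\equiv0$). Your plan is to show that a common component $\Gamma$ would force the curves $V_1=\ol{\{(p(z),q(z))\}}$ and $V_2=\ol{\{(w/\ol q(\zeta),\ol w/\ol p(\zeta))\}}$ to coincide, and then to derive a contradiction by comparing their degrees, asserting $\deg V_1=\max(n,m)/e$ while $\deg V_2$ is ``governed by $n+m$''. This degree comparison does not give a contradiction in general. Already for $p(z)=z^2$, $q(z)=z$ (so $n=2$, $m=1$, $e=1$) one has $V_1=\{X=Y^2\}$, while $V_2$ is parametrized by $\zeta\mapsto(w/\ol\zeta,\ol w/\ol\zeta^{\,2})$, which after the substitution $u=w/\ol\zeta$ becomes $u\mapsto(u,(\ol w/w^2)u^2)$; thus $V_2=\{Y=(\ol w/w^2)X^2\}$ also has degree $2$. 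The two parabolas differ, so the conclusion $V_1\neq V_2$ is still true here, but it does not follow from your degree count. More generally, common zeros of $p$ and $q$ lower $\deg V_2$ from $(n+m)/e$ by exactly $\sum_j\min(\mathrm{ord}_{\zeta_j}p,\mathrm{ord}_{\zeta_j}q)/e$, and this correction can bring $\deg V_2$ down to $\max(n,m)/e$. So as written the ``technical heart'' you flag is not merely bookkeeping; a genuinely different invariant of the two curves is needed.

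By contrast, the paper's proof of coprimacy (Lemma~\ref{lem:coprime1}) avoids algebraic geometry entirely and is short: write the Sylvester matrix \eqref{eqn:BigSyl} explicitly, and evaluate at a point $z_0$ where $q(z_0)=0$ but $p(z_0)\neq0$ (if such a point exists). The matrix then becomes block upper-triangular with determinant $(-1)^m(q_m\ol{p(z_0)})^n\neq0$, so $R\not\equiv0$. If instead the zero sets of $p$ and $q$ coincide, the hypothesis $p\neq cq$ furnishes a common zero $z_0$ where the orders of vanishing differ; dividing the first $n$ columns of the Sylvester matrix by the appropriate power of $(\ol z-\ol z_0)$ and letting $z\to z_0$ again yields a block form with nonzero determinant. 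This two-step evaluation is elementary and complete, and it is what you should replace your curve-comparison sketch with.
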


Since $n^2+m^2 < (m+n)^2$ when $m,n \geq 1$, the following theorem, our second main result of this paper, improves Bezout's bound for every $m,n \geq 1$, hence confirming the above conjecture of Abdulhadi and Hengartner.

\begin{thm}\label{thm:general}
Let $f(z) = p(z) \ol{q(z)}$ be a logharmonic polynomial with $n := \deg p \geq 1$, $ m := \deg q \geq 1$, and $p$ not a constant multiple of $q$.  Then the valence of $f$ is at most $n^2 + m^2$.
\end{thm}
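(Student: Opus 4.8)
The plan is to convert the equation $p(z)\overline{q(z)} = w$ into a polynomial system in two real (equivalently, two conjugate complex) variables and then bound the number of isolated solutions using Sylvester resultants rather than the cruder Bézout count. Writing $\overline{z} = \zeta$ as a formal second variable, the equation $p(z)\overline{q(z)} = w$ together with its conjugate $\overline{p(z)} q(z) = \overline{w}$ becomes the system
\begin{equation}
    p(z)\,q^*(\zeta) = w, \qquad p^*(\zeta)\, q(z) = \overline{w},
\end{equation}
where $p^*$ denotes the polynomial whose coefficients are the conjugates of those of $p$, so that $p^*(\zeta) = \overline{p(z)}$ when $\zeta = \overline{z}$. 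Here the first equation has bidegree $(n,m)$ in $(z,\zeta)$ and the second has bidegree $(m,n)$. A direct application of Bézout's theorem in $\mathbb{P}^1 \times \mathbb{P}^1$ (or the affine Bézout bound) yields $(n+m)^2$, and the point is that the special product structure forces many intersection points to lie at infinity or to be accounted for only once.

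The key step is to eliminate one variable via the Sylvester resultant. Treating both equations as polynomials in $z$ (with coefficients in $\mathbb{C}[\zeta]$), we form $R(\zeta) := \operatorname{Res}_z\big(p(z) q^*(\zeta) - w,\ q(z) p^*(\zeta) - \overline{w}\big)$. The first polynomial has degree $n$ in $z$ with leading coefficient $a_n q^*(\zeta)$ (where $a_n$ is the leading coefficient of $p$), and the second has degree $m$ in $z$ with leading coefficient $b_m p^*(\zeta)$; hence the Sylvester matrix is $(n+m)\times(n+m)$ and its determinant $R(\zeta)$ has degree at most $mn + mn = 2mn$ in $\zeta$ by the standard degree bound on resultants (each of the $n$ rows built from the first polynomial contributes entries of $\zeta$-degree at most $m$, and each of the $m$ rows from the second contributes entries of $\zeta$-degree at most $n$). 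So $R(\zeta)$ has at most $2mn$ roots — this is already better than $(n+m)^2$ but not yet $n^2 + m^2$, so a sharpening is needed. The improvement should come from the following two observations used in tandem: first, the resultant $R(\zeta)$ is not identically zero precisely because $p$ is not a constant multiple of $q$ (this is where that hypothesis enters, and it should be checked by a gcd/common-root argument — a common root of both polynomials in $z$ would force, after eliminating $w$, a proportionality of $p$ and $q$); second, each root $\zeta_0$ of $R$ with $R \not\equiv 0$ corresponds to at most $\min(n,m)$ values of $z$, but we only want the solutions with $\zeta_0 = \overline{z}$, and pairing a root $\zeta_0$ with its conjugate partner as a root of the "$z$-resultant" $S(z) := \operatorname{Res}_\zeta(\cdots)$ lets us intersect two solution sets. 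The cleanest route is to symmetrize: the genuine solutions lie in $\{R(\zeta) = 0\} \cap \{S(z) = 0\} \cap \{\zeta = \overline z\}$, and one bounds the valence by $\deg S = \deg R$ only if one argues that $z \mapsto \overline z$ gives a bijection — giving again $2mn$. To get down to $n^2 + m^2$ one instead must exploit that the solution count is symmetric in the roles of $(p, w)$ and $(q, \overline w)$ and use the arithmetic–geometric-type inequality $2mn \le n^2 + m^2$ the \emph{wrong} way; so in fact the right statement must be that the resultant factors or that its relevant part has degree $\le \max(n,m)^2$ in one variable and the bound $n^2+m^2$ arises from combining a degree-$n^2$ contribution and a degree-$m^2$ contribution. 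I would therefore set up the elimination so that $R(\zeta)$ visibly splits as a product reflecting the two factors $p$ and $q$: substituting $w = p(z)q^*(\zeta)$ into the second equation gives $q(z)p^*(\zeta)\,p(z)q^*(\zeta) = \overline w\, w = |w|^2$ along the solution set, i.e. $\big(p(z)p^*(\zeta)\big)\big(q(z)q^*(\zeta)\big) = |w|^2$; writing $P(z,\zeta) := p(z)p^*(\zeta)$ and $Q(z,\zeta):= q(z)q^*(\zeta)$ (each Hermitian, each of bidegree $(n,n)$ resp. $(m,m)$), the solutions satisfy the single real equation $PQ = |w|^2$ together with, say, $P(z,\zeta) = p(z)q^*(\zeta)\overline{p(z)q^*(\zeta)}^{\,1/2}\cdots$ — this needs to be made precise, but the upshot is a system of two equations of bidegrees $(n,n)$ and $(m,m)$, whose affine Bézout number is exactly $n^2 + m^2$ once one checks there are no solutions at infinity and no components at infinity contributing.

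The main obstacle, and the step requiring genuine care, is precisely this last reduction: producing a pair of polynomial equations whose bidegrees are $(n,n)$ and $(m,m)$ (rather than $(n,m)$ and $(m,n)$) while remaining equivalent to the original equation on the diagonal $\zeta = \overline z$, and then verifying the two nondegeneracy facts that make Bézout exact for that system — namely that the resultant (equivalently, the eliminant) does not vanish identically, which is where "$p$ not a constant multiple of $q$" must be invoked, and that there are no solutions "at infinity" inflating the count, which should follow from comparing leading forms of $P$ and $Q$. I expect the nonvanishing of the resultant to be the subtle point: one must rule out that $p(z)q^*(\zeta) - w$ and $q(z)p^*(\zeta) - \overline w$ share a common factor in $\mathbb{C}[z,\zeta]$, and the natural argument is that such a common factor, being a polynomial in $z$ and $\zeta$ separately with the product structure forced by the two equations, would entail $p(z)/q(z)$ constant — contradicting the hypothesis. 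Once $R \not\equiv 0$, its degree bound $n^2 + m^2$ (from the $(n,n)$–$(m,m)$ Sylvester matrix, which is $(n+m)\times(n+m)$ with row-degrees summing to $n\cdot n + m \cdot m$) gives the theorem, since every preimage of $w$ yields a distinct root $\zeta = \overline z$ of $R$.
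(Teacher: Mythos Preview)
Your initial setup is exactly the paper's approach, but you made a bookkeeping error in counting the rows of the Sylvester matrix, and that single slip sent you on an unnecessary detour. In the Sylvester matrix $\mathcal{S}_z(P,Q)$ for two polynomials of $z$-degrees $n$ and $m$, it is the degree-$n$ polynomial $P(z,\zeta)=p(z)q^*(\zeta)-w$ that contributes $m$ rows (not $n$), and the degree-$m$ polynomial $Q(z,\zeta)=q(z)p^*(\zeta)-\ol w$ that contributes $n$ rows (not $m$). The entries in the $m$ rows built from $P$ are the $z$-coefficients of $P$, each of $\zeta$-degree at most $m$; the entries in the $n$ rows built from $Q$ have $\zeta$-degree at most $n$. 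Hence
\[
\deg_\zeta R \;\le\; m\cdot m + n\cdot n \;=\; n^2+m^2,
\]
not $2mn$. (In the paper's notation this is the general bound $\deg_{z_2}\mathcal{R}_{z_1}(P,Q)\le nt+ms$ with $s=\deg_z Q=m$ and $t=\deg_{\ol z}Q=n$.) Since each genuine solution has $\zeta=\ol z$, distinct solutions give distinct roots of $R$, and the valence bound $n^2+m^2$ follows at once---there is no need for the $(n,n)$--$(m,m)$ reformulation you attempted, and indeed $2mn\le n^2+m^2$ goes the wrong way, as you noticed.

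What remains, and what you correctly flagged as the delicate point, is showing $R\not\equiv 0$, i.e.\ that $P$ and $Q$ are coprime in $\CC[z,\zeta]$ whenever $p$ is not a constant multiple of $q$. The paper carries this out explicitly: first, if $p$ and $q$ do not have the same zero set, pick $z_0$ with (say) $q(z_0)=0\ne p(z_0)$ and evaluate the Sylvester determinant at $\ol z=\ol{z_0}$; the matrix becomes block-triangular with determinant $(-1)^m(q_m\ol{p(z_0)})^n\ne 0$. If $p$ and $q$ do share the same zero set but are not proportional, some common zero $z_0$ has different multiplicities $k>\ell$ in $p,q$; dividing the first $n$ columns by $(\ol z-\ol{z_0})^k$ and letting $z\to z_0$ again yields a block-triangular matrix with nonzero determinant, so $R/(\ol z-\ol{z_0})^{kn}$ has a nonzero limit and $R\not\equiv 0$. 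Your sketch (``a common factor would force $p/q$ constant'') is the right intuition but not yet a proof; the paper's evaluation argument is what makes it rigorous.
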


This result admits further improvement, at least in the case $m=1$, in light of Theorem \ref{thm:BHconjUB}.  The question was raised in \cite{BshoutyHengartner} whether the maximal valence is actually linear in $m,n$. We suspect that the maximal valence grows linearly in $n$ for each fixed $m$ but grows quadratically when $n=m$ or when $n$ is asymptotically proportional to $m$.

Our proof of Theorem \ref{thm:general} uses a purely algebraic technique based on polynomial resultants applied to the system of equations obtained by considering $p(z) \ol{q(z)} - 1 =0$ and its complex conjugate $\ol{p(z)} q(z) - 1 = 0$.  
A technical obstacle in using this method is establishing the following result (stated as Lemma \ref{lem:coprime1} in Section \ref{sec:general}) showing that the left hand sides of these equations, viewed as bivariate polynomials in $z, \ol{z}$, are coprime.

\begin{lemma}\label{lem:coprimeIntro}
Given two complex analytic univariate polynomials $p$ and $q$ with $p$ not a constant multiple of $q$, the polynomials $P(z,\overline{z}) = p(z)\overline{q(z)} - 1 $ and $Q(z,\ol{z}) = \overline{p(z)}q(z) - 1$, viewed as bivariate polynomials in $z, \ol{z}$ are coprime.
\end{lemma}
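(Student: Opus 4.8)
The plan is to treat $P$ and $Q$ as elements of the polynomial ring $\CC[z,w]$, where $w$ plays the role of the formal variable $\ol{z}$, so that $P(z,w) = p(z)q^*(w) - 1$ and $Q(z,w) = p^*(w)q(z) - 1$, with $p^*,q^*$ denoting the polynomials obtained by conjugating the coefficients of $p,q$. Since $\CC[z,w]$ is a UFD, a common divisor of $P$ and $Q$ would have to be (up to units) a nonconstant polynomial $D(z,w)$ dividing both. The strategy is to derive a contradiction from the existence of such a $D$ by exploiting the very rigid product structure $p(z)q^*(w)$ and $p^*(w)q(z)$.

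First I would record the basic feature that in $P = p(z)q^*(w) - 1$ the leading behavior separates the variables: writing $p(z) = a\prod(z-\alpha_i)$ and $q^*(w) = b\prod(w-\beta_j)$, the zero set $V(P)$ is an irreducible-or-not affine curve on which neither $z$ nor $w$ can be constant (if $z = \alpha$ were forced, then $p(\alpha)q^*(w)=1$ for all $w$ on a curve, impossible unless $p(\alpha)=0$, but then $0 = 1$). So any common factor $D$ has positive degree in \emph{both} $z$ and $w$. Next, from $D \mid P$ and $D \mid Q$ we get $D \mid (P - Q) = p(z)q^*(w) - p^*(w)q(z)$ and also $D \mid (q(z)P - p(z)Q) = p(z)q(z)q^*(w) - p(z) - p(z)p^*(w)q(z) + p(z) = p(z)q(z)(q^*(w) - p^*(w))$ — wait, let me instead use the cleaner combination $q^*(w)\cdot Q - p^*(w)\cdot P$, which eliminates the product terms up to a factor: $q^*(w)(p^*(w)q(z) - 1) - p^*(w)(p(z)q^*(w)-1) = q(z)p^*(w)q^*(w) - q^*(w) - p(z)p^*(w)q^*(w) + p^*(w) = p^*(w)q^*(w)(q(z) - p(z)) - (q^*(w) - p^*(w))$. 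The point of such manipulations is to produce, as a consequence of $D\mid P$ and $D\mid Q$, a nonzero polynomial in which the variables are genuinely entangled through the hypothesis $p \neq cq$, and then to play it against $P$ itself.

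The cleanest route, and the one I would actually carry out, is via a resultant/specialization argument: fix a generic value $w = w_0 \in \CC$. Then $P(z,w_0) = q^*(w_0)p(z) - 1$ and $Q(z,w_0) = p^*(w_0)q(z) - 1$ are univariate polynomials in $z$; a common factor $D(z,w_0)$ of positive $z$-degree (which persists for all but finitely many $w_0$, since the $w$-degree of $D$ is positive) would force $\gcd_z(q^*(w_0)p(z) - 1,\ p^*(w_0)q(z)-1)$ to be nonconstant for infinitely many $w_0$. I would then show this is impossible unless $p$ is a constant multiple of $q$: a common root $z_0$ gives $q^*(w_0)p(z_0) = 1 = p^*(w_0)q(z_0)$, so $p(z_0)/q(z_0) = p^*(w_0)/q^*(w_0)$ is forced, pinning $p(z_0)/q(z_0)$ to a specific nonzero constant $\lambda(w_0)$; having a whole common \emph{factor} (not just a root) for infinitely many $w_0$ makes the rational function $p/q$ take finitely many values on infinitely many fibers, forcing $p/q$ constant — contradicting $p \neq cq$. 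The main obstacle I anticipate is handling the bookkeeping of ``for infinitely many $w_0$ there is a common $z$-factor'' rigorously — i.e., correctly passing between ``$D$ has positive degree in both variables'' and ``the specialized polynomials share a root for infinitely many specializations,'' and making sure the degenerate cases ($p$ or $q$ constant, or $q^*(w_0) = 0$, or the common factor degenerating) are all absorbed; the resultant $\operatorname{res}_z(P,Q) \in \CC[w]$ being not identically zero is exactly the statement of coprimeness, and I would aim to compute or estimate this resultant directly using the structure $P = q^*(w)p(z)-1$, $Q = p^*(w)q(z)-1$, which reduces the whole lemma to showing a single explicit polynomial in $w$ is not the zero polynomial.
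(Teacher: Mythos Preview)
Your high-level framework coincides with the paper's: both reduce the lemma to showing that the resultant $\mathcal{R}(w) := \operatorname{res}_z\bigl(q^*(w)p(z)-1,\ p^*(w)q(z)-1\bigr)$ is not identically zero in $\CC[w]$.  Where the approaches diverge is in \emph{how} to certify $\mathcal{R}\not\equiv 0$.

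The paper does not look at generic $w_0$; it evaluates (or takes a limit of) the Sylvester matrix at a carefully chosen \emph{special} point.  If $q$ has a zero $z_0$ not shared by $p$, one sets $w_0 = \overline{z_0}$, so that $q^*(w_0)=\overline{q(z_0)}=0$ while $p^*(w_0)=\overline{p(z_0)}\neq 0$; the top block of the Sylvester matrix then collapses to a strictly upper-triangular block with $-1$'s on the diagonal, the bottom block becomes lower-triangular with $q_m\overline{p(z_0)}$ on the diagonal, and the determinant is $(-1)^m(q_m\overline{p(z_0)})^n\neq 0$.  If instead $p$ and $q$ share all zeros, the hypothesis $p\neq cq$ forces some common zero $z_0$ where the orders of vanishing differ (say order $k$ for $p$ and $\ell<k$ for $q$); dividing the first $n$ columns by $(\overline{z}-\overline{z_0})^{k}$ and letting $z\to z_0$ again produces a block-triangular matrix with nonzero determinant.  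In both cases the mechanism is the same: a well-chosen $w_0$ makes the Sylvester matrix block-triangular.

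Your generic-specialization route, by contrast, has a genuine gap.  From a common root $z_0$ of $P(z,w_0)$ and $Q(z,w_0)$ you correctly extract $p(z_0)/q(z_0)=p^*(w_0)/q^*(w_0)$, but the deduction ``this forces $p/q$ constant'' is not justified and, as stated, is not clearly true: the common root $z_0$ \emph{moves with} $w_0$, so all you obtain is that the image of $w\mapsto\bigl(1/q^*(w),\,1/p^*(w)\bigr)$ lies (generically) on the image curve of $z\mapsto\bigl(p(z),q(z)\bigr)$.  That is a constraint on the curve $\{(p(z),q(z))\}$ (namely, invariance under the anti-holomorphic involution $(u,v)\mapsto(1/\bar v,\,1/\bar u)$), but it does not by itself force $p=cq$, and you have not indicated how to close this.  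Your fallback sentence---``compute or estimate this resultant directly''---is exactly what is needed, and the paper's special-point evaluation is the clean way to do it.
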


As a corollary of Lemma \ref{lem:coprimeIntro} we recover the following aforementioned result that was proved in \cite[Sec. 3]{AbdulhadiHengartner} using several complex analytic tools (including the Riemann mapping theorem, Schwarz reflection, analytic continuation, and properties of Blaschke products), whereas our proof of Lemma \ref{lem:coprimeIntro} is purely algebraic and essentially relies on a single property of resultants (namely, that coprimacy is equivalent to the resultant not vanishing identically).

\begin{cor}
If $p$ is not a constant multiple of $q$, then the logharmonic polynomial $p(z) \ol{q(z)}$ has finite valence.
\end{cor}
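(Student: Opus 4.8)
The plan is to deduce the corollary directly from Lemma~\ref{lem:coprimeIntro}, together with the elementary fact that two coprime bivariate polynomials over $\CC$ have only finitely many common zeros. We first dispose of degenerate cases. If $q$ is a nonzero constant, then $f = \ol{q}\,p$ is a nonconstant analytic polynomial (the hypothesis forces $p$ nonconstant) and hence has valence at most $\deg p$; the case in which $p$ is constant is similar. So we may assume $\deg p \geq 1$ and $\deg q \geq 1$. Moreover, the solution set of $f(z) = 0$ is $\{z : p(z) = 0\} \cup \{z : q(z) = 0\}$, which is finite. Thus it suffices to show that for each fixed $w \in \CC \setminus \{0\}$ the equation $p(z)\ol{q(z)} = w$ has only finitely many solutions $z \in \CC$.

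Fix such a $w$. Replacing $p$ by $p/w$ --- which changes neither the degrees nor the hypothesis that $p$ is not a constant multiple of $q$ --- we may assume $w = 1$. Let $\tilde p$ and $\tilde q$ denote the polynomials obtained from $p$ and $q$ by conjugating their coefficients, so that $\ol{p(z)} = \tilde p(\ol z)$ and $\ol{q(z)} = \tilde q(\ol z)$, and introduce the bivariate polynomials $P(z,\zeta) = p(z)\,\tilde q(\zeta) - 1$ and $Q(z,\zeta) = \tilde p(\zeta)\,q(z) - 1$ in $\CC[z,\zeta]$ (these agree, upon setting $\zeta = \ol z$, with the polynomials $P, Q$ of Lemma~\ref{lem:coprimeIntro}). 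If $z_0 \in \CC$ satisfies $p(z_0)\ol{q(z_0)} = 1$, then $P(z_0,\ol{z_0}) = p(z_0)\ol{q(z_0)} - 1 = 0$, while conjugating the defining equation gives $Q(z_0,\ol{z_0}) = \ol{p(z_0)}\,q(z_0) - 1 = 0$. Hence the injective map $z_0 \mapsto (z_0, \ol{z_0})$ carries the solution set into the common zero locus $V(P) \cap V(Q) \subseteq \CC^2$, and it is enough to prove that the latter is finite.

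By Lemma~\ref{lem:coprimeIntro}, $P$ and $Q$ are coprime in $\CC[z,\zeta]$. Were $V(P) \cap V(Q)$ infinite, then --- being a proper algebraic subset of $\CC^2$, since $P \not\equiv 0$ --- it would have an irreducible component of positive dimension, necessarily of the form $V(r)$ for some irreducible $r \in \CC[z,\zeta]$; the Nullstellensatz would then give $r \mid P$ and $r \mid Q$, contradicting coprimality. (Equivalently, one may note that $\deg_\zeta P = \deg q \geq 1$ and $\deg_\zeta Q = \deg p \geq 1$, so the resultant $\operatorname{Res}_\zeta(P,Q) \in \CC[z]$ is a nonzero polynomial, and --- using that $p(z_0) \neq 0$ at any common zero $(z_0,\zeta_0)$, so that no leading coefficient degenerates there --- its vanishing confines the first coordinates of common zeros to a finite set, and symmetrically for the second.) Thus $V(P) \cap V(Q)$ is finite, so the equation $p(z)\ol{q(z)} = w$ has finitely many solutions. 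Since $w$ was arbitrary, $f$ has finite valence. Essentially all of the substance has been front-loaded into Lemma~\ref{lem:coprimeIntro}: once coprimality is available, the argument is routine, the only minor points demanding care being the normalization to $w = 1$ and the standard passage from coprimality to finiteness of the common zero set.
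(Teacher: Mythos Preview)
Your proof is correct and follows essentially the same approach as the paper: the corollary is presented there precisely as an immediate consequence of Lemma~\ref{lem:coprimeIntro}, with the passage from coprimacy to finiteness of the common zero set left implicit (the paper would appeal to the Resultant Theorem developed in Section~\ref{sec:general}, which is one of the two routes you offer). Your write-up simply fills in the routine details the paper omits, including the degenerate cases and the normalization to $w=1$.
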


Concerning the \emph{minimal} valence we note, as was observed in \cite[Thm. 3.6]{AbdulhadiHengartner}, that for $n>m$ it follows from a generalization of the argument principle that the valence is bounded below by $n-m$. This lower bound is sharp (and hence gives the minimal valence) as it is easy to check (especially in polar coordinates $z = r e^{i \theta}$) that the equation $z^n \ol{z}^m = 1$ has exactly $n-m$ solutions (located at roots of unity).

\subsection{Sheil-Small's valence problem for harmonic polynomials}

Hengartner's valence problem for logharmonic polynomials is reminiscent of another valence problem posed by T. Sheil-Small, where instead of a product of $p(z)$ and $\ol{q(z)}$ a sum $p(z) + \ol{q(z)}$ is considered.

\begin{prob}[Sheil-Small's Valence Problem]
\label{prob:SS}
Determine the maximal valence of harmonic polynomials $p(z) + \ol{q(z)}$ with $n = \deg p > m = \deg q$.
\end{prob}

In his thesis \cite{W1}, A. S. Wilmshurst used the maximum principle for harmonic functions together with Bezout's theorem to show that the valence is at most $n^2$, and he constructed $n^2$-valent examples with $m=n-1$.  This confirmed a conjecture of Sheil-Small that $n^2$ is the maximum valence.  Wilmshurst conjectured \cite{W1} (cf. \cite[Remark 2]{W2}) that for each pair of integers $n>m\geq1$, the maximum valence is $3n-2 + m(m-1)$.  This conjecture was confirmed for $n=m-1$ by the above mentioned results of Wilmshurst and for $m=1$ by a result of the first-named author and G. Swiatek \cite{KhSw} together with L. Geyer's proof \cite{G2008} of the Crofoot-Sarason conjecture.
However, counterexamples to Wilmshurst's conjecture for the case $m=n-3$ were provided in  \cite{LLL}.
Further counterexamples were provided in \cite{LS}, \cite{HLLM}, and \cite{Random}. In spite of these counterexamples, it is still seems likely that, in the spirit of Wilmshurst's conjecture, the maximal valence increases linearly in $n$ for each fixed $m$ \cite{LLL}, \cite{KhavLeeSaez}, \cite{Random}.
The latter result \cite{Random} used a nonconstructive probabilistic method, thus connecting the study of the extremal problem with a parallel line of research on the average number of zeros of random harmonic polynomials that had been investigated (for a variety of models) in \cite{LiWei}, \cite{Lerariotruncated},
\cite{Andy}, \cite{AndyZach}.

Harmonic polynomials and logharmonic polynomials are just two species of so-called polyanalytic polynomials, that is polynomials $P(z,\ol{z})$ in $z$ and $\ol{z}$ that satisfy $\left(\frac{\partial}{\partial \ol{z}}\right)^{m+1} P(z,\ol{z}) = 0$ with $\frac{\partial}{\partial \ol{z}}:=\frac{1}{2} \left( \frac{\partial}{\partial x} + i \frac{\partial}{\partial y} \right)$ and hence take the form
$$ P(z,\ol{z}) = \sum_{k=0}^m p_k(z) \ol{z}^k,$$
where $p_k(z)$ are complex-analytic polynomials.
The above Problems \ref{prob:Hen} and \ref{prob:SS} thus fit into a broader setting concerning the valence of polyanalytic polynomials.  Polyanalytic polynomials have been studied classically \cite{Balk} and have also arisen in more recent studies in gravitational lensing \cite{Petters}, \cite{Perry}, approximation theory \cite{approx}, \cite{UnifApprox}, \cite{KhavBianalytic} numerical linear algebra \cite{Ginibre}, random matrix theory \cite{Huhtanen}, and determinantal point processes \cite{Abreu}.
Motivated by the valence problems for harmonic and logharmonic polynomials as well as the prominent role played by zero sets in several of the aforementioned studies on polyanalytic polynomials, it seems natural to pose the following general valence problem for polyanalytic polynomials.

\begin{prob}
Determine conditions on the coefficient polynomials $p_k(z)$ which guarantee that $P(z,\ol{z})$ has finite valence, and estimate the maximal valence in terms of $m$ along with the degrees of $p_k(z)$.
\end{prob}

The proof of Theorem \ref{thm:general} in Section \ref{sec:general} leads to the following result (see Lemma \ref{lem:PRL} in Section \ref{sec:general}) estimating the valence of polyanalytic polynomials for which $\deg p_k \leq n$ for each $k=0,1,...,m$.

\begin{prop}
Let $ P(z,\ol{z}) = \sum_{k=0}^m p_k(z) \ol{z}^k$ be a polyanalytic polynomial with $\deg p_k \leq n$ for each $k=0,1,...,m$.  Assume $P(z,\ol{z})$ and $Q(z,\ol{z}) = \ol{P(z,\ol{z})}$ are coprime.  Then the valence is at most $n^2+m^2$. 
\end{prop}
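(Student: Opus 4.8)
The plan is to run the Sylvester resultant argument behind Theorem~\ref{thm:general} in this generality; the only substantive change is the bookkeeping of bidegrees. First I would reduce to bounding the number of zeros of $P$ itself: for a general target $w$ one replaces $p_0(z)$ by $p_0(z)-w$, which leaves $\deg p_0 \le n$ (hence all the hypotheses) intact, so it suffices to bound the number of pairs $(z,\ol z)$ with $P(z,\ol z)=0$ under the standing assumption that $P$ and $Q$ are coprime.

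Now regard $P$ and $Q$ as honest bivariate polynomials in independent variables $u,v$, where $u$ stands for $z$ and $v$ for $\ol z$. Writing $p_k(z)=\sum_i a_{k,i}z^i$ we have $P(u,v)=\sum_{k=0}^m\big(\sum_i a_{k,i}u^i\big)v^k$ and $Q(u,v)=\sum_{k=0}^m\big(\sum_i \ol{a_{k,i}}\,v^i\big)u^k$, so the bidegrees are $\deg_u P\le n$, $\deg_v P\le m$, $\deg_u Q\le m$, $\deg_v Q\le n$. Any solution $z_0$ of $P(z,\ol z)=0$ also satisfies $Q(z_0,\ol{z_0})=\ol{P(z_0,\ol{z_0})}=0$, so $(z_0,\ol{z_0})$ is a common zero in $\CC^2$ of the bivariate polynomials $P,Q$; in particular $v=\ol{z_0}$ is a common root of the univariate polynomials $P(z_0,\cdot)$ and $Q(z_0,\cdot)$.

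The key step is the degree estimate for the eliminant $R(u):=\operatorname{Res}_v\big(P(u,v),Q(u,v)\big)\in\CC[u]$, formed from the Sylvester matrix of $P$ and $Q$ regarded as polynomials in $v$ of degrees $m$ and $n$ respectively. Because $P$ and $Q$ are coprime, $R\not\equiv 0$; this is the single property of resultants the argument uses, namely that coprimacy is equivalent to the resultant not vanishing identically. On the other hand, whenever $P(u_0,\cdot)$ and $Q(u_0,\cdot)$ share a root one has $R(u_0)=0$, so by the previous paragraph every solution $z_0$ is a root of $R$. It remains to check $\deg R\le n^2+m^2$: the Sylvester matrix is $(n+m)\times(n+m)$, with $n$ rows whose entries are the $v$-coefficients of $P$ (polynomials in $u$ of degree $\le\deg_u P\le n$) and $m$ rows whose entries are the $v$-coefficients of $Q$ (polynomials in $u$ of degree $\le\deg_u Q\le m$); expanding the determinant, the $n$ rows coming from $P$ contribute at most $n$ to the degree in $u$ each and the $m$ rows coming from $Q$ contribute at most $m$ each, so $\deg R\le n\cdot n+m\cdot m=n^2+m^2$. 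Since the solutions inject into the zero set of the nonzero polynomial $R$, there are at most $n^2+m^2$ of them.

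The real content---and the source of the improvement over the Bezout bound $(n+m)^2$---is precisely this asymmetric bidegree accounting: $P$ carries degree up to $n$ in $u$ but only up to $m$ in $v$, with the roles reversed for $Q$, so the Sylvester determinant is nowhere near the generic worst case $(n+m)^2$ one would get by treating $P$ and $Q$ as arbitrary polynomials of total degree $n+m$. The one genuinely delicate point to pin down is the specialization behavior of the Sylvester resultant---that an affine common zero $(z_0,\ol{z_0})$ really does force $R(z_0)=0$ even when leading coefficients degenerate under the substitution $u=z_0$---which is the standard statement that $\operatorname{Res}_v$ vanishes at $u_0$ exactly when the specialized pair has a common zero in $\mathbb{P}^1$; combined with $R\not\equiv 0$, this is everything the argument needs.
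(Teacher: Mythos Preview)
Your argument is correct and is essentially the paper's own proof: both eliminate one variable via the Sylvester resultant of $P$ and $Q=\ol P$, invoke coprimacy to guarantee the resultant is a nonzero univariate polynomial, and read off the degree bound $n^2+m^2$ from the asymmetric bidegree accounting (the paper packages this as Theorem~\ref{thm:Res} plus the observation in Section~\ref{sec:polyanalytic} that the constraint $z_2=\ol{z_1}$ forces each admissible $z_1$-value to have a unique partner). One small caveat: your opening reduction from ``valence'' to ``zeros'' by shifting $p_0$ tacitly assumes the coprimacy hypothesis survives the shift $P\mapsto P-w$, which is not automatic; the paper sidesteps this by stating the precise version (Lemma~\ref{lem:PRL}) directly as a bound on the number of zeros.
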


\subsection{Proof techniques}
The proof of Theorem \ref{thm:BHconjUB} begins with a reformulation of the valence problem in terms of rational harmonic functions and then specializes a technique previously used in that setting by the first named author and G. Neumann in \cite{KN}, where the key idea is to use a theorem of Fatou from holomorphic dynamics.
This indirect method was introduced by the first named author and G. Swiatek in \cite{KhSw} in their above-mentioned proof of the $m=1$ case of Wilmshurst's conjecture.  Subsequent adaptations of this method have led to solutions to additional problems including sharp estimates for the topology of quadrature domains \cite{LeeMakarov}, classification of the number of critical points of Green's function on a torus \cite{BergErem}, and sharp estimates for the number of solutions of certain transcendental equations \cite{BergErem2018}.
The results of \cite{KN}, which are directly relevant to the current paper, confirmed astronomer S. Rhie's conjecture in gravitational lensing (motivated by the connection to gravitational lensing the zeros of rational harmonic functions were investigated further in \cite{Bleher}, \cite{SeteCMFT}, \cite{SeteGrav}, \cite{SetePert}, \cite{Zur2018a}, \cite{Zur2018b}).

The proof of Theorem \ref{thm:general} utilizes another technique that has been applied in studies of gravitational lensing, in this instance, going in a separate, purely algebraic direction utilizing Sylvester resultants.  
The application of Sylvester resultants to bound the valence of certain polynomials in $z$ and its complex conjugate $\ol{z}$ was introduced by A.O. Petters in the study of gravitational lensing in \cite{Petters}.  Our proof of Theorem \ref{thm:general} particularly resembles the more recent application of resultants in the study of gravitational lensing by multi-plane point mass ensembles carried out by the third-named author in \cite{Perry}.


\subsection{Outline of the paper}
We present the proof of Theorem \ref{thm:BHconjUB} in Section \ref{sec:BHconjUB} where we begin by reviewing some preliminary results in Section \ref{sec:prelim} before proceeding to the proof in Section \ref{sec:proofBHconjUB}. Our proof of Theorem \ref{thm:BHconjUB} adapts the method from \cite{KN} with an important modification, see Remark \ref{rmk:modification}. We discuss Bshouty and Hengartner's  $(3n-3)$-valent examples from the perspective of holomorphic dynamics in Section \ref{sec:numerics}, where we also present the results of some numerical experiments.  We present the proof of Theorem \ref{thm:general} in Section \ref{sec:general}, where we include review of some essential algebraic preliminaries related to Sylvester resultants.


\section{Anti-holomorphic dynamics and the case $m=1$}\label{sec:BHconjUB}

\subsection{Preliminaries}\label{sec:prelim}

In this section, we collect some notation and terminology along with several lemmas that will be used in the proof of Theorem \ref{thm:BHconjUB}.



\subsubsection{Anti-holomorphic dynamics}
We will need the following result that adapts a classical theorem of Fatou from the setting of holomorphic dynamics to the setting of anti-holomorphic dynamics.

A point $z_0 \in \hC$ is a \emph{critical point} of $r$ if the spherical derivative of $r$ vanishes at $z_0$.
The complex conjugate of a rational function is referred to as an \emph{anti-rational map}.  A fixed point $z_0$ of the anti-rational map $z \mapsto \ol{r(z)}$ is referred to as an \emph{attracting} fixed point if $|r'(z_0)|<1$.
A fixed point $z_0$ is said to attract some point $w \in \hat{\CC}$ if the
iterates of $z \mapsto \ol{r(z)}$ starting at $w$ converge to $z_0$.

The following result is taken from \cite[Thm. 2.6]{GeyerRational}.

\begin{lemma}[Fatou's Theorem for anti-rational maps]\label{lem:Fatou}
Let $r$ be a rational function with degree greater than $1$. If $z_0$ is an attracting fixed point of the anti-rational map $z \mapsto \ol{r(z)}$, then $z_0$ attracts at least one critical point of $r$. 
\end{lemma}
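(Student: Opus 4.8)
The plan is to reduce the statement to the classical Fatou theorem for holomorphic rational maps by passing to the second iterate. Write $f(z) = \ol{r(z)}$ for the anti-rational map and set $g := f \circ f$. Since $f$ is anti-holomorphic, $g$ is holomorphic, and as a rational map of $\hC$ it has degree $\deg g = (\deg r)^2 \geq 4 > 1$, so the classical theory of holomorphic dynamics applies to $g$. (It is precisely here that the hypothesis $\deg r > 1$ is used.)

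First I would check that $z_0$ is an attracting fixed point of $g$ in the ordinary holomorphic sense. It is fixed: $g(z_0) = f(f(z_0)) = f(z_0) = z_0$. For the multiplier, the chain rule for a composition of two anti-holomorphic maps gives $g'(z_0) = \ol{r'(w_0)}\,r'(z_0)$ with $w_0 = \ol{r(z_0)} = z_0$, hence $g'(z_0) = |r'(z_0)|^2 < 1$ by the assumption $|r'(z_0)| < 1$. If $z_0 = \infty$ or $z_0$ is a pole of $r$ one conjugates by a Möbius transformation, or simply works throughout on $\hC$ in the spherical metric; this is also why ``critical point'' is understood to mean a zero of the spherical derivative. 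Thus $z_0$ has a well-defined immediate basin of attraction $B^*$ for $g$.

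Next I would invoke the classical Fatou theorem: the immediate basin $B^*$ of an attracting fixed point of a holomorphic rational map $g$ with $\deg g \geq 2$ contains a critical point of $g$. (This is classical; the standard proof uses the Koenigs linearizing coordinate near $z_0$ together with the Schwarz lemma applied to suitable inverse branches.) It then remains to descend from $g$ to $f$. Since conjugation is a spherical isometry, $\mathrm{Crit}(f)$ coincides with the critical set of $r$, and by the chain rule $\mathrm{Crit}(g) = \mathrm{Crit}(f) \cup f^{-1}(\mathrm{Crit}(f)) = \mathrm{Crit}(r) \cup f^{-1}(\mathrm{Crit}(r))$. Hence $B^*$ contains either a critical point $c$ of $r$ directly, or a point $z$ with $f(z) = c$ a critical point of $r$. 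In the first case $g^{\circ n}(c) = f^{\circ 2n}(c) \to z_0$; applying $f$ and using continuity of $f$ together with $f(z_0) = z_0$ gives $f^{\circ (2n+1)}(c) \to z_0$ as well, so the full orbit $f^{\circ n}(c) \to z_0$. In the second case $f^{\circ 2n}(z) \to z_0$, hence $f^{\circ(2n+1)}(z) = f^{\circ 2n}(c) \to z_0$ and $f^{\circ(2n+2)}(z) = f^{\circ(2n+1)}(c) \to z_0$, so again $f^{\circ n}(c) \to z_0$. Either way $z_0$ attracts the critical point $c$ of $r$ under iteration of $z \mapsto \ol{r(z)}$, which is the assertion.

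The main obstacle here is not a hard estimate but the bookkeeping in the final step: one must keep careful track of the critical points created by the composition $f \circ f$ and then correctly transfer the convergence of the even-indexed iterates of $f$ — which is all that the holomorphic dynamics of $g$ directly supplies — to convergence of the entire orbit under $f$. The convention that critical points are zeros of the spherical derivative is what makes the identifications $\mathrm{Crit}(f) = \mathrm{Crit}(r)$ and the handling of $\infty$ and poles seamless. Everything else is either the degree computation or an appeal to the classical Fatou theorem as a black box.
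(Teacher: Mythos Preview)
Your argument is correct. Note, however, that the paper does not actually prove this lemma; it simply quotes it from \cite[Thm.~2.6]{GeyerRational}. Your approach---passing to the holomorphic second iterate $g=f\circ f$, checking that $z_0$ is attracting for $g$ via $g'(z_0)=|r'(z_0)|^2$, invoking the classical Fatou theorem for $g$, and then descending back to $f$ using the chain-rule decomposition $\mathrm{Crit}(g)=\mathrm{Crit}(r)\cup f^{-1}(\mathrm{Crit}(r))$---is precisely the standard device for transferring Fatou-type results from holomorphic to anti-holomorphic dynamics, and the bookkeeping in your final step (upgrading convergence along even iterates of $f$ to convergence of the full orbit using continuity of $f$ and $f(z_0)=z_0$) is handled correctly. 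So you have supplied a self-contained proof where the paper gives only a citation.
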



\subsubsection{The argument principle for harmonic maps}

We will also need the following generalization of the argument principle for harmonic functions.
We recall that a zero $z_0$ of a harmonic function is referred to as a \emph{singular zero} of $F(z)=h(z) + \ol{g(z)}$ if the Jacobian of $F$, that is easily computed to be $|h'(z)|^2 - |g'(z)|^2$, vanishes at $z_0$. Moreover, $F$ is said to be sense-preserving (respectively sense-reversing) at $z_0$ if the Jacobian is positive (respectively negative) at $z_0$.  The  \emph{order} of a sense-preserving zero is defined to be the smallest positive integer $k$ such that $h^{(k)}(z_0) \neq 0$.  The order of a sense-reversing zero $z_0$ is defined to be $-k$ where $k$ is the order of $z_0$ as a sense-preserving zero of $\ol{F(z)}$.
If $F$ is harmonic in a punctured neighborhood of $z_0$ and $F \rightarrow \infty$ as $z \rightarrow z_0$ then $z_0$ is referred to as a \emph{pole}. 
As defined in \cite{ST}, the \emph{order of a pole} is given by $\frac{1}{2\pi} \Delta_{T} \arg F$, where $T$ is a sufficiently small circle centered at the pole, and $\Delta_{T} \arg F$ denotes the increment of the argument of $F(z)$ along $T$.   It follows that the order of a pole $z_0$ is negative when $F$ is sense-reversing in a punctured neighborhood of $z_0$, and elaborating on this point we set aside the following remark that will be used later.

\begin{remark}\label{rmk:pole}
In the sequel, we will consider the harmonic rational function
$\frac{1}{\ol{p(z)}} - b - z$.  We notice that this function has a pole at each zero of $p$ and is sense-reversing in a neighborhood of each pole.  Hence, the orders of the poles are all negative, and the sum of the orders is $-n$.
\end{remark}

The following version of the argument principle is taken from \cite[Thm. 2.2]{ST}.  We note that there are topological versions of the argument principle (having classical roots going back to the Poincar\'e-Hopf Theorem for vector fields) that hold in more general settings \cite{sheil-small_2002}, \cite{Cristea}.

\begin{lemma}[The Generalized Argument Principle for harmonic maps]\label{lem:argprinc}
Let $F$ be harmonic, except for finitely many poles, in a region $D$, and let $C$ be a Jordan curve that does not pass through any zeros or poles of $F$.  Let $\Omega$ denote the interior of $C$ and assume $F$ has no singular zeros in $\Omega$.  Let $\Delta_C \arg F$ denote the increment of the argument of $F(z)$ as $z$ traverses $C$. Let $n_+$, $-n_-$ be the sum of the orders of the sense-preserving and sense-reversing (respectively) zeros of $F$ in $\Omega$ and let $p_+, -p_-$ be the sum of the orders of the sense preserving and sense reversing (respectively) poles of $F$ in $\Omega$. Then 
\be
\frac{1}{2\pi}\Delta_{C} \arg F= n_+ - n_- - (p_+ - p_-).
\ee
\end{lemma}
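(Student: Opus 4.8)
The plan is to recognize the identity as the statement that the degree of $F/|F|\colon C\to S^1$ equals a signed count of the preimages of $0$ over $\Omega$, and to prove it by reducing the global winding number $\frac{1}{2\pi}\Delta_C\arg F$ to a sum of local contributions at each zero and pole of $F$ inside $\Omega$ — in the spirit of the classical argument principle, but carried out topologically, since $F$ is not assumed analytic and the contour integral $\frac{1}{2\pi i}\oint F'/F$ is unavailable. Let $z_1,\dots,z_N$ be the finitely many zeros and poles of $F$ in $\Omega$ (the nonsingular zeros are isolated, hence finite in number in the compact set $\overline{\Omega}$, and the poles are finite by hypothesis); none of them lies on $C$. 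Around each $z_j$ I would excise a small, positively oriented circle $T_j$, with the bounded disks pairwise disjoint and contained in $\Omega$. On $\overline{\Omega}$ with these open disks removed, $F$ is continuous and nowhere zero, hence maps that set into $\CC\setminus\{0\}$; since $C$ is homologous to $T_1+\dots+T_N$ in $\Omega\setminus\{z_1,\dots,z_N\}$ and the map sending a $1$-cycle $\gamma$ to the winding number of $F\circ\gamma$ about the origin is a homomorphism from $H_1(\Omega\setminus\{z_1,\dots,z_N\})$ to $\ZZ$, we obtain
\begin{equation*}
\frac{1}{2\pi}\Delta_C\arg F \;=\; \sum_{j=1}^N \frac{1}{2\pi}\Delta_{T_j}\arg F .
\end{equation*}
It then remains only to evaluate each local increment.

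For a pole $z_j$ there is essentially nothing to prove: $\frac{1}{2\pi}\Delta_{T_j}\arg F$ is, by the very definition of the order of a pole recalled before the statement, that order, and — keeping track of the sign conventions fixed there (in particular that a sense-reversing pole has negative order) and just as in the analytic identity \emph{zeros minus poles} — tallying these increments over all poles yields the contribution $-(p_+-p_-)$. For a zero $z_j$ I would invoke the hypothesis that it is nonsingular. Writing $F=h+\ol g$ near $z_j$ and using $F(z_j)=0$, Taylor expansion gives $F(z)=h'(z_j)(z-z_j)+\ol{g'(z_j)}\,\ol{(z-z_j)}+o(|z-z_j|)$, so on $T_j=\{\,|z-z_j|=\e\,\}$,
\begin{equation*}
F(z)=\e\bigl(h'(z_j)e^{i\theta}+\ol{g'(z_j)}\,e^{-i\theta}\bigr)+o(\e).
\end{equation*}
Nonsingularity means the Jacobian $|h'(z_j)|^2-|g'(z_j)|^2$ is nonzero, so the two leading coefficients have distinct moduli and the leading term has modulus at least $\e\,\bigl||h'(z_j)|-|g'(z_j)|\bigr|>0$ on $T_j$. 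A Rouch\'e-type estimate then shows that for $\e$ small the winding number of $F|_{T_j}$ about $0$ equals that of $\theta\mapsto h'(z_j)e^{i\theta}+\ol{g'(z_j)}\,e^{-i\theta}$, which is $+1$ when $|h'(z_j)|>|g'(z_j)|$ and $-1$ when $|h'(z_j)|<|g'(z_j)|$. One also checks that, under nonsingularity, every zero has order exactly $\pm 1$: if a sense-preserving zero had order $k\ge 2$ then $h'(z_j)=0$, forcing the Jacobian $-|g'(z_j)|^2\le 0$ and contradicting sense-preservation, and the sense-reversing case follows by applying this to $\ol F$. Hence $\frac{1}{2\pi}\Delta_{T_j}\arg F$ equals the order of the zero $z_j$, and summing over the zeros gives $n_+-n_-$. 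Adding the two contributions gives the asserted formula.

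I expect the main obstacle to be the local analysis at the zeros — specifically, the passage from the linearization to the winding number. One must verify that the $o(\e)$ remainder is genuinely dominated on $T_j$ by the minimum modulus $\e\,\bigl||h'(z_j)|-|g'(z_j)|\bigr|$ of the leading term, which is precisely where nonsingularity is used, and one must separately rule out higher-order nonsingular zeros. A secondary point requiring care is that, since $F$ is not holomorphic, the localization step cannot be justified via a contour integral of $F'/F$ and must instead rest entirely on homotopy (equivalently, homology) invariance of the topological winding number.
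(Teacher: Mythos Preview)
The paper does not prove this lemma at all: it is quoted verbatim from \cite[Thm.~2.2]{ST} and used as a black box. So there is no ``paper's proof'' to compare against, and your write-up is an independent argument rather than a reconstruction.

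That said, your argument is the standard one and is essentially correct. The localization via homology of $C$ to $\sum T_j$ in $\Omega\setminus\{z_1,\dots,z_N\}$ is the right substitute for the unavailable contour integral, and your computation at a nonsingular zero---linearize, note that $\bigl||h'(z_j)|-|g'(z_j)|\bigr|>0$ gives a uniform lower bound on the leading term, apply a Rouch\'e-type comparison, and read off the winding of the ellipse $\theta\mapsto a e^{i\theta}+b e^{-i\theta}$---is clean and complete, including the observation that nonsingularity forces the order to be exactly $\pm 1$. The only place you are slightly glib is the pole contribution: you write that ``tallying these increments over all poles yields the contribution $-(p_+-p_-)$,'' appealing to the analogy with the analytic case. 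Since the paper \emph{defines} the order of a pole to be the local winding $\frac{1}{2\pi}\Delta_{T_j}\arg F$, the sum of local increments at poles is literally the sum of the pole orders; the minus sign in the displayed formula then reflects the convention (inherited from \cite{ST} and used in Remark~\ref{rmk:pole}) that a sense-reversing pole carries negative order. It would strengthen the write-up to make that bookkeeping explicit rather than gesturing at ``zeros minus poles.''
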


\subsubsection{Tools for reducing the general case to the generic case}

The generalized argument principle stated in Lemma \ref{lem:argprinc} above can only be applied in the nondegenerate case where the harmonic mapping is free of singular zeros.
The next two lemmas will be used to reduce the problem of establishing the upper bound in the general case (where singular zeros are allowed) to establishing it for the nondegenerate case. Harmonicity comes into play in this  reduction, and there is no analogous strategy in general smooth settings.  To elaborate, the first of these lemmas establishes a genericity result for nondegenerate examples.  It is an important principle of differential topology that results of this type hold more generally for smooth maps. On the other hand, the second lemma establishes a stability result for extremal examples, and as pointed out in \cite{BergErem} this type of result does not hold for arbitrary smooth maps.

The following result is taken from \cite[Lemma 1]{KN}.

\begin{lemma}[nondegenerate examples are generic ]\label{lem:perturb}
If $r(z)$ is a rational function of degree greater than 1, then the set of complex numbers $c$ for which $\ol{r(z)} - z - c$ does not have singular zeros is open and dense in $\CC$.
\end{lemma}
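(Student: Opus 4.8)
The plan is to recognise the set of ``bad'' parameters --- those $c$ for which $\ol{r(z)} - z - c$ has a singular zero --- as the image of one fixed, at-most-one-dimensional set under one fixed real-analytic map, and then to argue that such an image is both closed in $\CC$ and Lebesgue-null.

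First I would compute the relevant Jacobian. Writing $F_c(z) = h(z) + \ol{g(z)}$ with $h(z) = -z - c$ and $g(z) = r(z)$, the Jacobian of $F_c$ equals $|h'(z)|^2 - |g'(z)|^2 = 1 - |r'(z)|^2$. Hence $z_0$ is a singular zero of $F_c$ exactly when $|r'(z_0)| = 1$ and $c = \ol{r(z_0)} - z_0$, so the set of bad parameters is
\begin{equation*}
B \;=\; \bigl\{\, \ol{r(z)} - z \;:\; z \in S \,\bigr\}, \qquad S := \bigl\{\, z \in \CC \;:\; |r'(z)| = 1 \,\bigr\}.
\end{equation*}
It therefore suffices to prove that $B$ is closed in $\CC$ and has empty interior.

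Next I would analyse $S$. Because $\deg r > 1$, the rational function $r'$ is nonconstant; it blows up at each pole of $r$ (so $S$ stays at positive distance from those poles, and $r, r'$ are holomorphic near $S$), and as $z \to \infty$ one has $|r'(z)| \to 0$ or $|r'(z)| \to \infty$, so $S$ is bounded. Since $|r'|$ is continuous away from the poles, $S$ is also closed in $\CC$; thus $S$ is compact. Moreover $|r'|$ is not identically $1$ on any nonempty open set, for otherwise the open mapping theorem would force $r' \equiv e^{i\theta}$ and hence $\deg r = 1$. Therefore $S$ is the zero set of the real-analytic, not identically vanishing, function $(x,y) \mapsto |r'(x+iy)|^2 - 1$ on the (connected) complement of the poles, and so $S$ is a real-analytic set of dimension at most $1$; in particular $S$ has planar Lebesgue measure zero. (If $S = \emptyset$ then $B = \emptyset$ and the lemma is immediate, so we may assume $S \neq \emptyset$.)

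Finally I would transfer these two properties to $B$ via the map $\phi(z) = \ol{r(z)} - z$, which is real-analytic, hence locally Lipschitz, on a neighbourhood of the compact set $S$. Continuity on a compact set shows $B = \phi(S)$ is compact, hence closed, so $\CC \setminus B$ is open; and a locally Lipschitz self-map of the plane sends Lebesgue-null sets to Lebesgue-null sets, so $B$ has measure zero, whence $B$ contains no disk and $\CC \setminus B$ is dense. I expect the only genuinely delicate point to be the structure of $S$: one needs that the zero set of a nontrivial real-analytic function on a planar domain is negligible, and one must check that the poles of $r$ cause no interference (they do not, since $|r'|$ is large near them). The rest is a routine unwinding of the definition of a singular zero, together with the standard fact that locally Lipschitz images of null sets are null.
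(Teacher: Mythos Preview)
Your argument is correct. The paper does not supply its own proof of this lemma but simply cites it from \cite[Lemma~1]{KN}; the approach you outline---recognising the bad parameter set as $\phi(S)$ with $S=\{|r'|=1\}$ a compact real-analytic curve and $\phi(z)=\ol{r(z)}-z$ locally Lipschitz, so that $\phi(S)$ is closed and Lebesgue-null---is the standard one and is essentially the argument given in that reference.
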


The following result is taken from \cite[Prop. 3]{BergErem}.

\begin{lemma}[extremal examples are stable]\label{lem:open}
Let $f:D \rightarrow \CC$ be a harmonic function defined in a region $D$ in $\CC$.
Suppose that every $w \in \CC$ has at most $\mu$ preimages, where $\mu < \infty$. Then the
set of points which have $\mu$ preimages is open.
\end{lemma}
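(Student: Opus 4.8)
Write $f = h + \overline{g}$ with $h,g$ holomorphic, so that $\re f = \re(h+g)$ and $\im f = \im(h-g)$ are locally real parts of holomorphic functions, and recall the Jacobian $J_f = |h'|^2 - |g'|^2$. The first step is to record a structural consequence of the hypothesis $\mu<\infty$: $J_f$ cannot vanish identically on any open subset of $D$. Indeed, $J_f \equiv 0$ on an open set forces $|h'| \equiv |g'|$ there, hence $g' = c\,h'$ for a unimodular constant $c$, hence $f$ maps that open set into an affine line; since a level set of a nonconstant real harmonic function is one-dimensional this would give an infinite fiber, and likewise $f$ cannot be locally constant. Therefore the critical set $\{J_f=0\}$ is closed, nowhere dense and real-analytic, and by Sard's theorem the set $C$ of critical values of $f$ has measure zero.

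\textbf{The main case.} Fix $w_0$ with $n(w_0)=\mu$ and let $z_1,\dots,z_\mu$ be its preimages, which are finite in number, hence isolated. Choose disjoint disks $D_j\ni z_j$ with $\overline{D_j}\subset D$ and $f\ne w_0$ on $\overline{D_j}\setminus\{z_j\}$, so that $|f-w_0|\ge\delta>0$ on $\bigcup_j\partial D_j$. The goal is to show that for every $w$ near $w_0$ each $D_j$ contains a preimage of $w$; together with the standing bound $n\le\mu$ this yields $n(w)=\mu$ on a neighborhood of $w_0$. Setting $n_\pm(w)=\#\big(f^{-1}(w)\cap\{\pm J_f>0\}\big)$, each of $n_+,n_-$ is lower semicontinuous, because on $\{\pm J_f>0\}$ the map $f$ is an orientation-definite local diffeomorphism, so preimages lying there persist under small perturbation of $w$; moreover $n=n_++n_-$ off the measure-zero set $C$. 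Consequently, if $w_0\notin C$ then $n_+(w_0)+n_-(w_0)=\mu$, and by lower semicontinuity $n(w)\ge n_+(w)+n_-(w)\ge\mu$ for $w$ near $w_0$, so $n\equiv\mu$ near $w_0$ and we are done in this case; in particular this already handles all $w_0$ outside the measure-zero set $C$.

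\textbf{The obstacle: maximal-valence points that are critical values.} It remains to treat $w_0\in C$, i.e.\ the case where some preimage $z_j$ is a singular zero of $f-w_0$; here one must show such a $w_0$ cannot have maximal valence. I would argue through the local topological degrees $d_j=\deg(f-w_0,D_j,0)$. If $d_j\ne0$ then $f(D_j)$ contains a full neighborhood of $w_0$ (homotopy invariance of the degree), so $z_j$ supplies at least $|d_j|\ge1$ preimages of every nearby $w$; and if some $|d_j|\ge2$, or if $\sum_j|d_j|>\mu$, this already forces $n(w)>\mu$ near $w_0$, contradicting the bound — so one may assume $d_j=0$ at every singular preimage. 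But $d_j=0$ prevents $f$ from being sense-preserving, or sense-reversing, throughout any punctured neighborhood of $z_j$ (by the classical fact that a sense-definite harmonic map with an isolated singular point is a local branched cover, which would make $d_j\ne0$); hence both $\{J_f>0\}$ and $\{J_f<0\}$ accumulate at $z_j$, $f$ carries each onto an open set with $w_0$ on its boundary, and one shows these two image sets overlap substantially near $w_0$, so that a generic nearby $w$ has at least two preimages near each such $z_j$ — again producing $n(w)>\mu$. The delicate part, which I expect to be the real obstacle, is making this overlap precise for a \emph{degenerate} singular zero (using that the nodal sets of $\re(h+g)$ and $\im(h-g)$ are finite unions of analytic arcs meeting only at $z_j$, together with the maximum principle for these harmonic functions) and synchronizing the construction when $w_0$ has several singular preimages; this is exactly the step that relies on harmonicity and has no counterpart for general smooth maps. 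Once it is established that every maximal-valence point has only regular preimages, it lies outside $C$, and the lemma follows from the lower-semicontinuity argument of the previous paragraph.
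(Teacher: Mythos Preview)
The paper does not give a proof of this lemma; it simply quotes the result from \cite{BergErem} (Proposition~3 there).  So there is no in-text argument to compare against, and I will assess your attempt on its own merits.

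Your reduction is correct: it suffices to show that each small disk $D_j$ around a preimage $z_j$ of $w_0$ contains a preimage of every nearby $w$.  Your treatment of the case where all preimages are regular is fine.  The gap is exactly where you flag it --- the possibility of a singular preimage $z_j$ with local topological degree $d_j=0$.  You outline a plan (both signs of $J_f$ accumulate at $z_j$, so the images of $\{J_f>0\}$ and $\{J_f<0\}$ ``overlap substantially''), but this is not carried out, and the heuristic does not obviously survive the degenerate situations you yourself mention.

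In fact that case is \emph{vacuous}: an isolated zero of a planar harmonic mapping always has nonzero Poincar\'e index; equivalently, a harmonic map with discrete fibers is open.  One way to see this: writing $f=h+\overline g$ with $h(z_j)=g(z_j)=0$ and $p=\operatorname{ord}_{z_j}h$, $q=\operatorname{ord}_{z_j}g$, then whenever $p\ne q$, or $p=q$ but the leading coefficients have unequal moduli, the linear homotopy $h+t\,\overline g$ (or $s\,h+\overline g$) keeps $z_j$ an isolated zero and connects $f$ to a holomorphic or antiholomorphic map, forcing $d_j=\pm\min(p,q)\ne 0$; the remaining borderline case is handled by analyzing the interlacing of the nodal arcs of $\Re(h+g)$ and $\Im(h-g)$, which is essentially what you gesture at but do not complete.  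Once $d_j\ne 0$ is known at \emph{every} isolated preimage, homotopy invariance of degree immediately gives a preimage of each nearby $w$ in each $D_j$, and the lemma follows in one line --- no Sard, no regular/critical dichotomy, no lower-semicontinuity bookkeeping.  This openness property is precisely the harmonic-specific ingredient underlying the remark (which you reproduce) that the lemma fails for general smooth maps.
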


\subsection{Proof of Theorem \ref{thm:BHconjUB}}\label{sec:proofBHconjUB}

The problem is to bound the number of solutions of 
\be\label{eq:original}
p(z) \ol{q(z)} = w
\ee
when $q(z)$ is of degree $m=1$ and $p$ is a polynomial of degree $n>1$. 
By factoring out the leading coefficient of $q$ and assimilating it into $p$, we may assume without loss of generality that $q(z) = z+b$, $b\in\CC$.

Since we easily get an upper bound of $n+1$ in the case $w=0$ (by setting each factor $p(z)$ and $\ol{z+b}$ to zero), we may assume $w \neq 0$.  By multiplying $p$ by $1/w$ we may further assume without loss of generality $w=1$.
Applying these reductions and rearranging \eqref{eq:original}, the problem is then to bound the number of solutions of the equation
\be\label{eq:reduced}
\frac{1}{\ol{p(z)}} - b - z = 0,
\ee
i.e., zeros of the harmonic rational function $z \mapsto \frac{1}{\ol{p(z)}} - b - z$.
We first estimate the number of sense-reversing zeros
in the following proposition that is a specialized version of \cite[Prop. 1]{KN}.  The proof of the proposition essentially follows \cite{KN} with one important modification (see Remark \ref{rmk:modification} below).  We simplify the presentation of the proof (in comparison with the proof of \cite[Prop. 1]{KN}) by making use of Lemma \ref{lem:Fatou} in place of the classical theorem of Fatou and by settling for a slightly weaker (yet sufficient for our purposes) result that avoids consideration of so-called ``neutral'' fixed points.

\begin{prop}\label{prop:dynamics}
Let $p$ be a polynomial of degree $n \geq 2$.
Then the number of sense-preserving zeros of 
$\displaystyle \frac{1}{\ol{p(z)}} - b -z$ is at most $n$.
\end{prop}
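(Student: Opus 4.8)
The plan is to recast the zeros of $F(z) := \frac{1}{\ol{p(z)}} - b - z$ as fixed points of a suitable anti-rational map and then apply Lemma~\ref{lem:Fatou}. Rewriting the equation $F(z)=0$ in the form $z = \ol{R(z)}$ with $R(z) := \frac{1}{p(z)} - \ol{b} = \frac{1 - \ol{b}\,p(z)}{p(z)}$, one first records that $R$ is a rational map of degree exactly $n$: its numerator vanishes where $p = 1/\ol{b} \neq 0$ and its denominator where $p = 0$, so there is no cancellation, and $\deg R = n \ge 2$ (the case $b=0$ being the same, with $R = 1/p$). Thus Lemma~\ref{lem:Fatou} is available for the map $z \mapsto \ol{R(z)}$, whose finite fixed points are precisely the solutions of $F(z)=0$.

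Next I would match the sense-preserving zeros of $F$ with the attracting fixed points of $z \mapsto \ol{R(z)}$. Writing $F = h + \ol{g}$ with $h(z) = -(z+b)$ and $g(z) = 1/p(z)$, one has $g'(z) = -p'(z)/p(z)^2 = R'(z)$, so the Jacobian of $F$ equals $|h'(z)|^2 - |g'(z)|^2 = 1 - |R'(z)|^2$. Hence a zero $z_0$ of $F$ is sense-preserving exactly when $|R'(z_0)| < 1$, that is, exactly when $z_0$ is an attracting fixed point of $z \mapsto \ol{R(z)}$; a zero with $|R'(z_0)| = 1$ is a singular zero, neither sense-preserving nor sense-reversing, so it need not be counted. (This is why the proposition requires no nondegeneracy hypothesis and why ``neutral'' fixed points never enter.)

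The crux is then to bound the number of attracting fixed points of $z \mapsto \ol{R(z)}$. By Lemma~\ref{lem:Fatou} the basin of every attracting fixed point contains at least one critical point of $R$, and basins of distinct attracting fixed points are disjoint; therefore the number of attracting fixed points does not exceed the number of distinct critical points of $R$ on $\hC$. The point is that this critical set is small: since $R'(z) = -p'(z)/p(z)^2$, every finite critical point of $R$ --- whether a zero of $R'$ that is not a pole of $R$, or a multiple pole of $R$ (equivalently a multiple zero of $p$) --- is a zero of $p'$, and there are at most $n-1$ of those; adjoining the single point $\infty$ (which is a critical point of $R$, since $R$ has local degree $n$ there) gives at most $(n-1)+1 = n$ critical points in all. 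Consequently $z \mapsto \ol{R(z)}$ has at most $n$ attracting fixed points, and so $F$ has at most $n$ sense-preserving zeros, as claimed.

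I expect the part requiring the most care to be this critical-point count, namely confirming that every critical point of $R$ --- understood in the spherical-derivative sense of Lemma~\ref{lem:Fatou}, so that the point at infinity and the poles of $R$ are properly accounted for --- lies in $\{\,z : p'(z)=0\,\} \cup \{\infty\}$ and hence that there are at most $n$ of them. This is precisely where the argument departs from the computation in \cite{KN}: solving for $\ol{z}$ produces a rational map $R$ whose critical set is governed by $p'$ together with the point at infinity, which is exactly what yields the bound $n$ rather than the generic Riemann--Hurwitz count $2\deg R - 2 = 2n-2$.
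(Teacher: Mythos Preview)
Your proof is correct and follows essentially the same route as the paper: recast sense-preserving zeros as attracting fixed points of the anti-rational map $z\mapsto \ol{R(z)}$ with $R=1/p-\ol{b}$, invoke Lemma~\ref{lem:Fatou}, and then observe that the distinct critical points of $R$ lie in $\{p'=0\}\cup\{\infty\}$, giving at most $n$. If anything, your write-up is slightly more careful than the paper's---you explicitly verify $\deg R=n$, compute the Jacobian to justify the sense-preserving/attracting correspondence, and handle multiple poles of $R$ in the critical-point count (the paper's parenthetical locating the finite critical points ``at the zeros of $p$'' is a slip for $p'$).
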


\begin{proof}
Let $r(z) = \frac{1}{p(z)}-\ol{b}$.
The sense-preserving zeros of $\displaystyle \frac{1}{\ol{p(z)}}-b-z$ are attracting fixed points of the anti-rational map $z \mapsto \ol{r(z)}$.
By Lemma \ref{lem:Fatou}, the number of such attracting fixed points is at most the number of critical points of $r$, which is a rational function of degree $n$.  By the Riemann-Hurwitz relation \cite[Sec. 17.14]{Forster}, a rational function of degree $n$ has exactly $2n-2$ critical points (counted with multiplicity), but the special case $r(z) = \displaystyle \frac{1}{p(z)} - \ol{b}$ of the shifted reciprocal of a polynomial of degree $n$ has a critical point of order $n-1$ at infinity along with $n-1$ finite critical points (located at the zeros of $p$).  Consequently,
the anti-rational map $z \mapsto \ol{r(z)} $ has at most $n-1+1=n$ attracting fixed points, i.e., the harmonic rational function $\ol{r(z)}-z = \displaystyle \frac{1}{\ol{p(z)}}-b-z$ has at most $n$ sense-preserving zeros, as desired.
\end{proof}

 \begin{remark}\label{rmk:modification}
 To reiterate an important point in the above proof of Proposition \ref{prop:dynamics}, we note that while being of high multiplicity (counted $n-1$ times in the Riemann-Hurwitz relation), the critical point at infinity only counts once in the dynamics argument, since it can be attracted to at most one attracting fixed point.  In comparison with general anti-rational maps, this diminishes the possible number of attracting fixed points by $n-2$ which is responsible for the ultimate improvement in the upper bound $3n-1$ appearing in Theorem \ref{thm:BHconjUB} in comparison with the (sharp) upper bound $5n-5$ \cite{KN} for the valence of harmonic rational functions of the form $\ol{r(z)}-z$ when $r$ is a general analytic rational function (the improvement $n-2$ gets doubled to $2n-4$ in the proof of Theorem \ref{thm:reduced} below at the step where the argument principle is applied).
 \end{remark}

From the discussion preceding Proposition \ref{prop:dynamics}, it follows that the problem of establishing Theorem \ref{thm:BHconjUB} reduces to proving the following result.

\begin{thm}\label{thm:reduced}
The number of solutions of \eqref{eq:reduced} is at most $3n-1$, where as before $n:= \deg p$.
\end{thm}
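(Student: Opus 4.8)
The plan is to count the zeros of the harmonic rational function $F(z) = \frac{1}{\ol{p(z)}} - b - z$ by combining the dynamical bound from Proposition \ref{prop:dynamics} on the sense-preserving zeros with the Generalized Argument Principle (Lemma \ref{lem:argprinc}) to control the sense-reversing zeros. First I would reduce to the nondegenerate case: by Lemma \ref{lem:perturb}, applied to the rational function $r(z) = \frac{1}{p(z)} - \ol b$, for a dense open set of shifts $c$ the perturbed function $\ol{r(z)} - z - c$ has no singular zeros, and by Lemma \ref{lem:open} (extremal examples are stable) it suffices to prove the bound $3n-1$ under the assumption that $F$ has no singular zeros. (Strictly, one perturbs, proves the bound for the perturbation, and then argues the original valence cannot exceed the supremum of nearby valences; this is the standard genericity-plus-stability packaging used in \cite{KN}, \cite{BergErem}.)

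Next, in the nondegenerate case, let $n_+$ be the number of sense-preserving zeros of $F$ and $n_-$ the number of sense-reversing zeros; since there are no singular zeros, every zero is simple, so the sums of orders are just the counts, and the total valence is $n_+ + n_-$. Proposition \ref{prop:dynamics} gives $n_+ \le n$. For the sense-reversing zeros I would apply Lemma \ref{lem:argprinc} on a large circle $C = \{|z| = R\}$. The poles of $F$ occur exactly at the $n$ zeros of $p$, and by Remark \ref{rmk:pole} each such pole is sense-reversing with negative order, the orders summing to $-n$; in the notation of the lemma, $p_+ = 0$ and $p_- = n$. It remains to evaluate the winding number $\frac{1}{2\pi}\Delta_C \arg F$ for $R$ large. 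As $|z| \to \infty$, the term $\frac{1}{\ol{p(z)}} \to 0$, so $F(z) \approx -z - b$, which winds once clockwise around large circles; hence $\frac{1}{2\pi}\Delta_C \arg F = -1$ for $R$ sufficiently large. Plugging into Lemma \ref{lem:argprinc},
\be
-1 = n_+ - n_- - (0 - n) = n_+ - n_- + n,
\ee
so $n_- = n_+ + n + 1 \le 2n + 1$, using $n_+ \le n$. Therefore the total valence is $n_+ + n_- = 2n_+ + n + 1 \le 2n + n + 1 = 3n + 1$.

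This naive bookkeeping overshoots by $2$, so the main obstacle — and the crux of the argument — is to recover the sharper estimate $n_+ + n_- \le 3n - 1$. The key is the observation highlighted in Remark \ref{rmk:modification}: the degree-$n$ rational map $r$ has $2n-2$ critical points counted with multiplicity, but one of them, the critical point at infinity, has multiplicity $n-1$, and in the Fatou-type argument a single critical point can be attracted to at most one attracting fixed point. Thus the dynamical count in Proposition \ref{prop:dynamics} is really $n_+ \le (n-1) + 1 = n$ rather than $n_+ \le (2n-2)+\text{(one extra for }\infty) $; more to the point, the improvement is that the sense-reversing count is not independent of this structure. Following \cite{KN}, I would argue that the winding/argument-principle step can be run on a carefully chosen contour (or with a more refined local analysis near infinity and near the poles) so that the deficiency of $n-2$ in the number of attracting fixed points — coming from the high-multiplicity critical point at infinity being "wasted" — feeds back and gets doubled, reducing the crude bound $3n+1$ to $3n-1$. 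Concretely, the expectation is that a more careful treatment of the behavior of $F$ at infinity (noting that $F$ has a zero of the pole/zero structure forced by $\frac1{\ol{p(z)}}$ decaying like $|z|^{-n}$) yields $\frac{1}{2\pi}\Delta_C \arg F$ contributing an extra $-2$ relative to the naive count, equivalently replacing the relation above by $n_- = n_+ + n - 1$, whence the total valence is $2n_+ + n - 1 \le 3n - 1$. Verifying this refined contour/winding computation, and checking that it is compatible with the perturbation argument (the perturbation must preserve the relevant behavior at infinity), is where the real work lies; the dynamical input and the argument principle are otherwise entirely standard once set up.
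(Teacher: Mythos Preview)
Your overall architecture is exactly the paper's: reduce to the nondegenerate case via Lemmas \ref{lem:perturb} and \ref{lem:open}, bound $n_+$ by Proposition \ref{prop:dynamics}, and then use Lemma \ref{lem:argprinc} on a large circle with the pole data from Remark \ref{rmk:pole}. The gap is not a missing idea but a sign error in the winding number, and everything you write after the words ``overshoots by $2$'' is an attempt to repair a problem that does not exist.

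Concretely: as $z=Re^{i\theta}$ traverses the circle counterclockwise, $-z-b$ also traverses a large circle \emph{counterclockwise}, since $z\mapsto -z$ is a rotation by $\pi$ and is orientation-preserving. Hence $\frac{1}{2\pi}\Delta_C\arg F = +1$, not $-1$. With this correction the argument-principle identity reads
\[
1 = n_+ - n_- - (0 - n) = n_+ - n_- + n,
\]
so $n_- = n_+ + n - 1 \le 2n-1$ and $n_+ + n_- \le 3n-1$ directly. No refined contour, no ``extra $-2$ at infinity'', and no additional input from Remark \ref{rmk:modification} is needed at this stage; that remark explains why Proposition \ref{prop:dynamics} already gives $n_+\le n$ (rather than $2n-2$), and its effect is fully accounted for once you invoke the proposition. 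Fix the sign and your proof is the paper's proof.
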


\begin{proof}
As in the proof of Proposition \ref{prop:dynamics} let us use the notation 
\be\label{eq:specified}
r(z) = \frac{1}{p(z)}-\ol{b}.
\ee
\emph{Step 1.}
We first consider the case that $\ol{r(z)}-z$ does not have singular zeros.
Then we apply Lemma \ref{lem:argprinc} on a disk $D_R$ of radius $R$ large enough that $D_R$ contains all zeros and poles of $\ol{r(z)}-z$.
Since $r(z) = \mathcal{O}(1)$ as $z \rightarrow \infty$, the increment of the argument of  $\ol{r(z)}-z$ along the boundary of $D_R$ is $2\pi$ for any $R$ sufficiently large. By Lemma \ref{lem:argprinc}, $$ 1 = n_+ - n_- - (p_+ - p_-).$$
As noted in Remark \ref{rmk:pole}, we have $p_+ = 0$ and $p_-=n$, so that the above becomes
\be 
n_- = n_+ + n - 1.
\ee
By Proposition \ref{prop:dynamics} we have $n_+ \leq n$ which together with the above gives the following estimate on $n_-$.
\be 
n_- \leq 2n - 1,
\ee
Therefore, the total number of zeros satisfies $n_+ + n_- \leq 3n-1$. This establishes the theorem in the case when $\ol{r(z)}-z$ has no singular zeros.

\emph{Step 2.} We extend the result from Step 1 to the general case (where singular zeros are allowed) by employing Lemmas \ref{lem:perturb} and \ref{lem:open} as follows.  Let $\mu(n)$ denote the maximal number of zeros, and suppose $\hr$ is a rational function of the specified form \eqref{eq:specified} such that
$\ol{\hr(z)}-z$ has $\mu(n)$ zeros. By Lemma \ref{lem:perturb}, there is a sequence of numbers $c_k \in \CC$ with $c_k \rightarrow 0$ such that $\ol{\hr(z)}-c_k-z$ is free of singular zeros for each $k$. By Lemma \ref{lem:open}, the set of $c$ such that $\ol{\hr(z)}-c-z$ attains $\mu(n)$ zeros is open. Hence, for $k$ large enough $\ol{\hr(z)}-c_k-z$ is both free of singular zeros and attains $\mu(n)$ zeros.
It then follows from Step 1 of the proof that $\mu(n) \leq 3n-1$, and this completes the proof of the theorem. 
\end{proof}






\section{Numerical results}\label{sec:numerics}

The example from \cite{BshoutyHengartner} with valence at least $3n-3$ is the following. Let
\be\label{eq:example}
f(z) = \left( \frac{z^{n}}{n} - z \right) \ol{z} = |z|^2 \left( \frac{z^{n-1}}{n} - 1 \right) .
\ee

As pointed out in \cite{BshoutyHengartner},
for $w=-\frac{n-1}{n}$
the roots of unity $z_k = e^{2\pi i k / (n-1)}$, ($k = 1,2,...,n-1$) are solutions of $f(
z) = w$.

Following the reformulation we used above, let us rewrite $f(z)=w$ as
\be\label{eq:example2}
\ol{r(z)} = z,
\ee
where
\be
r(z):= \frac{n w}{z^n - nz} = -\frac{n-1}{z^n-nz}.
\ee

Then the roots of unity $z_k$ are attracting fixed points of $\ol{r(z)}$.  Indeed, we have
\begin{align*}
    r(z_k) &= -\frac{n-1}{z_k (z_k^{n-1}-n)} \\
    &= -\frac{n-1}{z_k (1-n)} = \frac{1}{z_k} = \ol{z_k},
\end{align*}
where we used $|z_k| = 1$ in the last equality.  This shows that each root of unity $z_k$ is a fixed point.  To see that each such $z_k$ is attracting, we compute
$$r'(z) = (nz^{n-1}-n)\frac{n-1}{(z^{n}-nz)^2},$$
and notice that this vanishes when evaluated at a root of unity, i.e., we have $r'(z_k)=0$. Thus, they are actually super-attracting fixed points. 

Attracting fixed points of $\ol{r(z)}$ are orientation-preserving solutions of $\ol{r(z)} -z = 0$.  By the generalized argument principle, this leads to at least $3n-3$ total solutions.  

In fact, we can see from inspecting the dynamics of iteration of the map $z \mapsto \ol{r(z)}$ that there are exactly $3n-3$ for this example, since the critical point at infinity is not attracted to a fixed point. On the contrary, it is in a period-two orbit $\infty \rightarrow 0 \rightarrow \infty \rightarrow 0 \cdots$.  This diminishes the possible number of attracting fixed points by one, i.e., inspecting the proof of Proposition \ref{prop:dynamics} the upper bound stated in the proposition becomes $n-1$. 
This then diminishes the total count by two.  Indeed, following the argument in the proof of Theorem \ref{thm:reduced} using the generalized argument principle, we have $n_+ \leq n-1$, $n_- \leq 2n-2$, and $N = n_+ + n_- \leq 3n-3$, so that the valence of this example is at most $3n-3$ (and hence exactly $3n-3$).

In order to show sharpness of the upper bound in Theorem \ref{thm:BHconjUB}, we would like to modify this example to have an additional attracting fixed point that attracts the critical point at infinity, leading to a total of $3n-1$ solutions of $\ol{r(z)}=z$. According to numerics presented in the subsection below, this strategy succeeds at least for $n=3,4$, but it remains an open problem to verify this analytically and extend it to each $n$.

\subsection{Numerical examples showing sharpness for $n=3,4$}

Let us consider a modified version of Bshouty and Hengartner's example with the following slightly different choice of $r$ that includes two real  parameters $A>0, C \geq 0$ (adding $C$ prevents infinity from going to the origin under iteration, and including $A$ gives some extra flexibility while trying to tune parameters to produce an additional attracting fixed point)
$$ r(z):= \frac{A}{z\left[z^{n-1}+n\left(\frac{A}{n - 1}\right)^{(n - 1)/(n + 1)}\right]}+C.$$
When $C=0$ and $A=1$, this reduces to (a rotation of) the Bshouty-Hengartner example, with attracting fixed points at roots of $(-1)$ instead of roots of unity.  When $C=0$ and $A$ varies, there are still $n-1$ attracting fixed points on a common circle, namely, they are at the $(n-1)$th roots of $(-1)$ scaled by  $R(A,n):=\left(\frac{A}{n-1}\right)^\frac{1}{n+1}$, i.e., for each $k=1,..,n-1$, the point $z_k= R(A,n) e^{i\pi\frac{1+2k}{n-1}}$ is a critical point for $r(z)$ that is also a fixed point of $\ol{r(z)}$.

With specific choices of parameters, we used numerical evaluation of the built-in Mathematica command NestList in order to iterate the anti-rational map $z \mapsto \ol{r(z)}$ while tracking the trajectory of each critical point. 
According to these numerical experiments, when $n=3$, $A=100$, $C=0.68$, this example has three attracting fixed points and hence $8=3n-1$ solutions. The attracting fixed points are located at $ z \approx 1.257 \pm 2.069 i$ and $z \approx 2.31$.
The critical point at infinity is attracted to the latter fixed point, and the symmetric pair of critical points on the imaginary axis are each attracted to the nearest of the other two fixed points.  Similarly, we were able to locate parameter values that, according to numerical simulation, produce an example showing sharpness of Theorem \ref{thm:BHconjUB} in the case $n=4$. Namely, with $A=250$ and $C=.86$ we find four attracting fixed points and hence $11 = 3n-1$ solutions.
The basins of attraction for each of these extremal examples are depicted in Figure \ref{fig:Basins}.

\begin{figure}[h]
\centering
\includegraphics[scale=0.125]{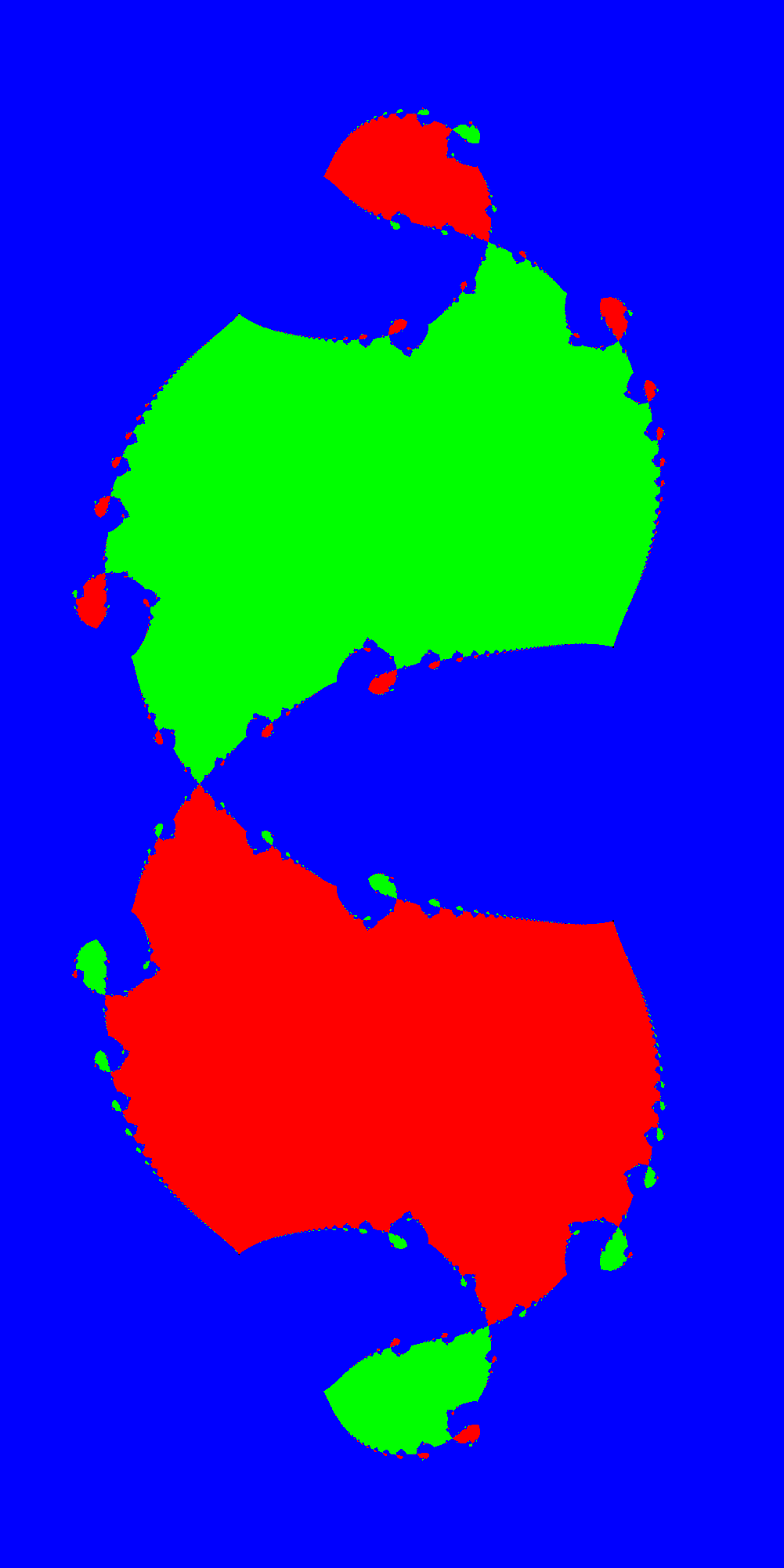}
\hspace{0.1in}
\includegraphics[scale=0.25]{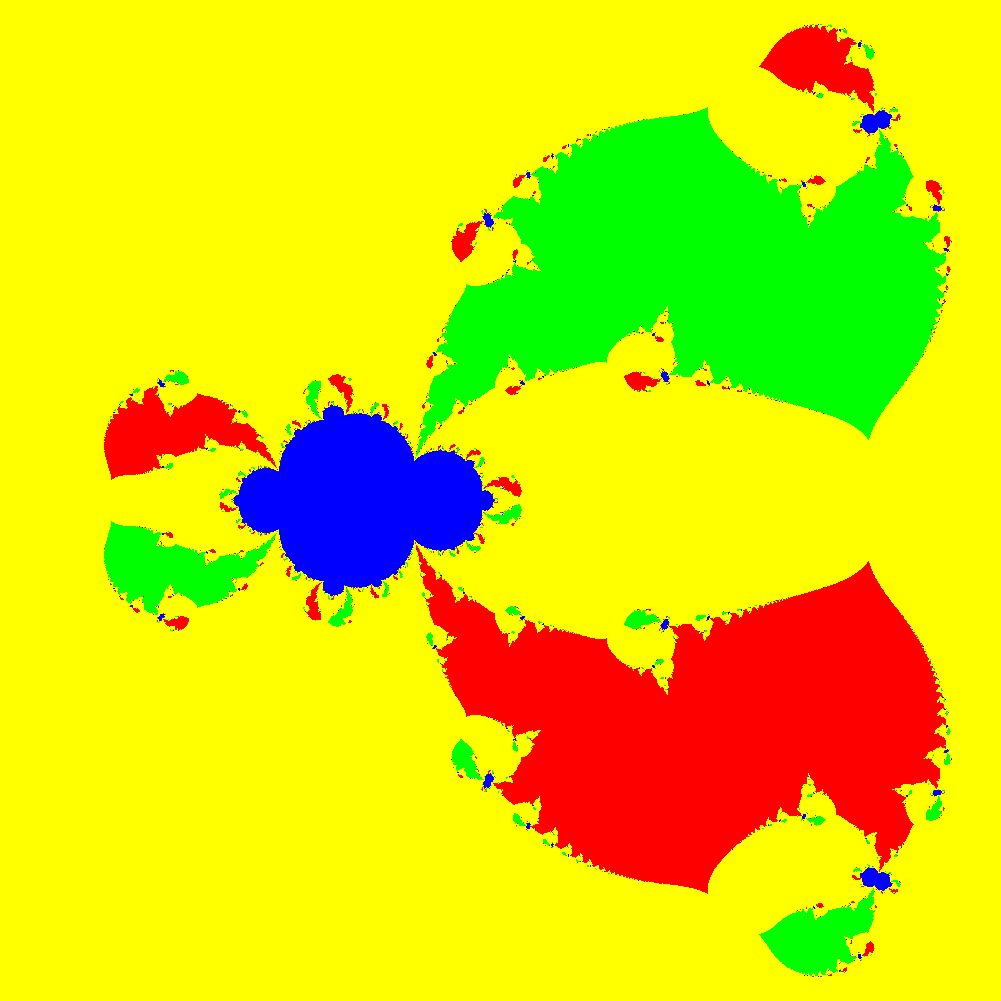}
\caption{Left: The basins of attraction for each of the three attracting fixed points of the anti-rational map $\displaystyle z \mapsto \frac{100}{\ol{z}\left[\ol{z}^{2}+3\sqrt{50}\right]}+0.68$ which is an extremal example for the case $n=3$. 
Right: The basins of attraction for each of the four attracting fixed points of the anti-rational map $\displaystyle z \mapsto \frac{250}{\ol{z}\left[\ol{z}^{3}+4\left(\frac{250}{3}\right)^{3/5}\right]}+0.86$ which is an extremal example for the case $n=4$. 
In both pictures, regions with the same color/shade are in a common basin of attraction, i.e., are attracted to a common fixed point. Image Credit: Both pictures were created by Walter Bergweiler.}
\label{fig:Basins}
\end{figure}



\section{An improvement on the Bezout bound}\label{sec:general}

Recall that Hengartner’s Valence Problem asks for a bound on the number of solutions to 
\be\label{eq:logharmonic}
p(z) \ol{q(z)} = w ,
\ee
where $w$ is an arbitrary complex number, and $p$ and $q$ are (analytic) complex polynomials, of degree $n$ and $m$ respectively, such that $p$ is not a constant multiple of $q$. 
The case $w=0$ admits a bound of $m+n$ on the valence (by separately setting $p(z)=0$, $\ol{q(z)}=0$), so we may assume $w \neq 0$.
Furthermore, without loss of generality (by multiplying by $1/w$ and renaming the coefficients of, say, $p$) we assume $w=1$.

Let us further reformulate the problem as one of bounding the number of zeros of the polynomial $P(z,\overline{z}) = p(z)\overline{q(z)}-1$.
Writing $Q(z,\ol{z}) = \ol{P(z,\ol{z})}$, we note that $P(z,\ol{z})$ and $Q(z,\ol{z})$ have the same zeros.  Hence, the number of zeros $z \in \CC$ of $P(z,\ol{z})$ is bounded from above by the number of common zeros $(z_1,z_2) \in \CC^2$ of $P(z_1,z_2)$ and $Q(z_1,z_2)$.
An application of Bezout's theorem (which can be justified when $p$ is not a constant multiple of $q$ \cite{AbdulhadiHengartner}) then gives an upper bound of $(n+m)^2$ on the number of zeros, an estimate that we will refer to as the \emph{Bezout bound}.

The main goal of this section is to improve the Bezout bound by proving Theorem \ref{thm:general}
which in light of the above discussion reduces to establishing the following result.

\begin{thm}\label{thm:genReduced}
Let $p$ and $q$ be complex polynomials of degree $n$ and $m$ respectively. If $p$ is not a constant multiple of $q$, then the number of zeros of $P(z,\ol{z})= p(z)\ol{q(z)}-1$ is bounded above by $n^2 + m^2$.
\end{thm}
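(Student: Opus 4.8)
The plan is to treat $P(z_1,z_2) = p(z_1)\,\ol{q}(z_2) - 1$ and $Q(z_1,z_2) = \ol{p}(z_2)\,q(z_1) - 1$ as genuine bivariate polynomials over $\CC$, where $\ol{p},\ol{q}$ denote the polynomials obtained by conjugating the coefficients of $p,q$. Since $z$ is a zero of $p(z)\ol{q(z)}-1$ if and only if $(z_1,z_2)=(z,\ol z)$ is a common zero of $P$ and $Q$, it suffices to bound $\#\{(z_1,z_2)\in\CC^2 : P=Q=0\}$, and for this I would eliminate one variable using the Sylvester resultant. The key point is to take the resultant \emph{in the right variable}: note that $P$ has degree $n$ in $z_1$ and degree $m$ in $z_2$, while $Q$ has degree $m$ in $z_1$ and degree $n$ in $z_2$. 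Eliminating $z_1$, the resultant $R(z_2) := \operatorname{res}_{z_1}(P,Q)$ is a polynomial in $z_2$ alone, and the standard degree estimate for resultants (the resultant of polynomials of $z_1$-degrees $d_1,d_2$ whose coefficients have $z_2$-degrees at most $e_1,e_2$ is a polynomial in $z_2$ of degree at most $d_1 e_2 + d_2 e_1$) gives $\deg R \le n\cdot n + m\cdot m = n^2+m^2$. The crucial input that $R$ is not the zero polynomial is exactly Lemma \ref{lem:coprimeIntro}: $P$ and $Q$ are coprime in $\CC[z_1,z_2]$, hence coprime in $\CC(z_2)[z_1]$, so their resultant does not vanish identically.

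Once $R$ is a nonzero polynomial of degree at most $n^2+m^2$, the plan is to argue that every common zero $(z_1,z_2)$ of $P$ and $Q$ has its second coordinate $z_2$ among the roots of $R$, and then to show that the map from common zeros to roots of $R$ is injective, so that the number of common zeros is at most $\deg R \le n^2+m^2$. For the first claim I would use the elimination property of the resultant: if $(a,b)$ is a common zero, then $P(\cdot,b)$ and $Q(\cdot,b)$ share the root $a$, so either they have a common factor in $\CC[z_1]$ or one of the two has a strict drop in degree at $z_2=b$; in either case the Sylvester determinant $R(b)$ vanishes. (A clean way to handle the leading-coefficient degeneracies uniformly is to observe that the leading $z_1$-coefficients of $P$ and $Q$ are $z_2$-polynomials that are \emph{constant} — indeed $P$'s leading $z_1$-coefficient is (leading coeff of $p$)$\cdot\ol{q}(z_2)$ which does have $z_2$-degree $m$, so I should be a little careful; see the obstacle paragraph.) For injectivity, I would use that given a valid $z_2=b$, the first coordinate is determined: from $P(z_1,b)=0$ we get $p(z_1) = 1/\ol{q}(b)$ (note $\ol q(b)\ne 0$ since otherwise $P(z_1,b)=-1\ne0$), and from $Q(z_1,b)=0$ we get $q(z_1)=1/\ol p(b)$; but actually on the diagonal $z_1 = \ol{z_2}$ the first coordinate is literally $\ol b$, so distinct common zeros coming from genuine solutions of the original equation automatically have distinct $z_2$.

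Here is the cleaner route I would actually write down, avoiding the injectivity subtlety for arbitrary $\CC^2$-solutions: since we only care about solutions $z$ of $p(z)\ol{q(z)}=1$, map each such $z$ to $\ol z$, which is a root of $R(z_2)$; this map is obviously injective (it is conjugation), so the number of solutions is at most the number of roots of $R$, counted without multiplicity, which is at most $\deg R \le n^2+m^2$. This sidesteps having to control common zeros of $P,Q$ off the anti-diagonal. The only facts needed are (a) $R\not\equiv 0$, which is Lemma \ref{lem:coprimeIntro}, (b) $\deg_{z_2} R \le n^2+m^2$, which is the resultant degree bound applied with $z_1$-degrees $n$ (for $P$) and $m$ (for $Q$) and coefficient $z_2$-degrees $\le m$ (for $P$) and $\le n$ (for $Q$), and (c) if $z$ solves the original equation then $R(\ol z)=0$, which follows from $(z,\ol z)$ being a common zero of $P,Q$ and the elimination property of resultants.

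The main obstacle is handling the \textbf{degree drops in the leading coefficients} when applying the elimination property, and getting the degree bound in (b) exactly right rather than as $(n+m)^2$. The leading $z_1$-coefficient of $P$ is a nonconstant polynomial in $z_2$ (it is $p_n\ol{q}(z_2)$, of degree $m$), so at the roots of $\ol q$ the $z_1$-degree of $P$ drops, and the naive Sylvester matrix degree count must be examined carefully to confirm one still gets $\deg_{z_2} R \le n^2 + m^2$ and that $R(b)=0$ at every common-zero second coordinate $b$, including such degenerate $b$. I expect this is resolved by the standard refined statement that $\operatorname{res}_{z_1}(P,Q)$, formed as the full Sylvester determinant, is divisible by appropriate powers of the leading coefficients when they drop — equivalently, by using the fact that $\operatorname{res}_{z_1}(P,Q)(b)=0$ whenever $P(\cdot,b)$ and $Q(\cdot,b)$ have positive-degree gcd \emph{or} whenever both leading coefficients vanish at $b$ — and by carefully bookkeeping the row/column degrees in the $(n+m)\times(n+m)$ Sylvester matrix, whose entries from the $P$-rows have $z_2$-degree $\le m$ and from the $Q$-rows have $z_2$-degree $\le n$, so that the determinant (a sum over permutations, $m$ rows contributing $\le m$ each from $Q$... wait, $m$ rows from $Q$? there are $n$ rows of $Q$-coefficients and $m$ rows of $P$-coefficients in $\operatorname{res}_{z_1}$) has degree $\le m\cdot m + n\cdot n$. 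Pinning down this bookkeeping, and dispatching the degenerate $b$'s, is the one place requiring genuine care; everything else is formal.
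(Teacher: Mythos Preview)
Your proposal is correct and follows essentially the same route as the paper: form the Sylvester resultant $R(z_2)=\mathcal{R}_{z_1}(P,Q)$, invoke Lemma~\ref{lem:coprimeIntro} (which is Lemma~\ref{lem:coprime1} in the paper) to ensure $R\not\equiv0$, use the degree bound $\deg R\le n^2+m^2$, and exploit the constraint $z_2=\ol{z_1}$ to get injectivity of $z\mapsto \ol z$ into the roots of $R$. Your worry about leading-coefficient degeneracies is unnecessary here: the $(n+m)\times(n+m)$ Sylvester determinant has $m$ rows of $P$-coefficients (each of $z_2$-degree $\le m$) and $n$ rows of $Q$-coefficients (each of $z_2$-degree $\le n$), giving $\deg_{z_2} R\le m\cdot m+n\cdot n$ unconditionally, and at any genuine solution $(z,\ol z)$ neither leading $z_1$-coefficient vanishes (else $P(z,\ol z)$ or $Q(z,\ol z)$ would equal $-1$), so the elimination property applies without subtlety.
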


Before presenting the proof of this result,
we review some preliminaries concerning Sylvester resultants.

\begin{remark}
It might seem surprising that an application of resultants leads to an improvement on the Bezout bound considering that Bezout's theorem itself is often proved using resultants \cite{SS}; the proof of Theorem \ref{thm:genReduced} indeed uses resultants in an essentially different manner, as in \cite[Sec. 11.5.1]{PetBook}, that combines fruitfully with the constraint that the second variable is taken to be the complex conjugate of the first variable, see Remark \ref{rmk:resultant} below.
\end{remark}

While our proof of Theorem \ref{thm:genReduced} uses essentially the same ideas as in \cite{Petters}, \cite{Perry}, in the current setting we are able to establish coprimacy of the polynomials $P,Q$ without imposing an additional nondegeneracy condition.  It is an open problem in the theory of gravitational lensing to determine whether the nondegeneracy assumption can be removed from the results of \cite{Petters}, \cite{Perry}, bounding the number of apparent images of a single background source lensed by a multiplane gravitational lens comprised of point masses.

\subsection{Sylvester Resultants}

Let $P\in\CC[z_1,z_2]$ be a bivariate polynomial with complex coefficients and variables. Written variously 
\be \label{eqn:bipolyP}
P(z_1, z_2) = \sum_{i=0}^n a_i(z_2)z_1^i = \sum_{j=0}^m b_j(z_1)z_2^j = \sum_{i,j} c_{i,j}z_1^iz_2^j 
\ee
we denote by $\deg_{z_1} P = n$, $\deg_{z_2} P = m$, and $\deg P = N = \max \{ i+j \;\:|\;\:c_{i,j} \neq 0 \}$ the \textit{degree in $z_1$}, the \textit{degree in $z_2$}, and the \textit{total degree} of P respectively. Of course, $\deg P \leq \deg_{z_1} P + \deg_{z_2}P$. Let $Q \in \CC[z_1,z_2]$ be another such polynomial written
\be \label{eqn:bipolyQ}
Q(z_1, z_2) = \sum_{i=0}^s d_i(z_2)z_1^i = \sum_{j=0}^t e_j(z_1)z_2^j = \sum_{i,j} f_{i,j}z_1^iz_2^j .
\ee
Denote $\deg_{z_1} Q = s$, $\deg_{z_2} Q = t$, and $\deg Q = M$. Writing $P$ and $Q$ as polynomials in $z_1$, their coefficients are polynomials in $z_2$, namely $a_0, ..., a_n$ and $d_0,...,d_s$ respectively. The \textit{$z_1$-Sylvester Matrix} $\mathcal{S}_{z_1}(P,Q)$ of $P$ and $Q$ is an $(n+s) \times (n+s)$ matrix which is constructed by diagonally stacking $s$ copies of the polynomials $a_i$ and $n$ copies of the $d_i$ with $0$ used for the other entries as follows.
\be \label{eqn:Sylvester}
\mathcal{S}_{z_1}(P,Q)=
\left[\arraycolsep=3pt\def\arraystretch{1}
\begin{array}{ccccc|cccc}
a_n & a_{n-1} & a_{n-2} & \cdots & a_1 & a_0 & 0 & \cdots & 0 \\
0 & a_{n} & a_{n-1} & a_{n-2} & \cdots & a_1 & a_0 & \cdots & 0 \\
\vdots & & \ddots &  &  &   & & \ddots \\
0 & \cdots & 0 & a_n & a_{n-1} & a_{n-2} & \cdots & a_1 & a_0 \\
 \hline 
d_s & d_{s-1} & \cdots & d_0 & 0  & 0 & \cdots  & & 0 \\
0 & d_s & d_{s-1} & \cdots &  d_0 & 0  & \cdots   & &0 \\
0 & 0 & d_s & d_{s-1} & \cdots & d_0 & \cdots & & 0 \\
\vdots &  &  & \ddots & &   & \ddots &  \\
0 &  \cdots & & 0 & d_s & d_{s-1} & \cdots &  & d_0 \\
\end{array}
\right].
\ee
Formally, for $1\leq i \leq s$, entry $\mathcal{S}_{z_1}(P,Q)_{i,j} = a_{n-j+i}$ for $i\leq j \leq i+n$ and 0 otherwise. Likewise for $s+1 \leq  i \leq s+n$ entry $\mathcal{S}_{z_1}(P,Q)_{i,j} = d_{i-j}$ for $i-s\leq j \leq i$ and 0 
otherwise.
Note that the exact location of columns in the top and bottom halves of \eqref{eqn:Sylvester} depends on the values of $n$ and $s$, but the matrix always has the indicated block structure where the top right $s \times s$ block is lower triangular and the bottom left $n \times n$ block is upper triangular.

\begin{remark}\label{rmk:resultant}
Note that there is an important, yet subtle, difference between the Sylvester matrix formulation in \cite{SS}, which uses total degree, and \cite{PetBook} which uses the degree in $z_1$. This difference (along with the fact that we will be interested in solutions satisfying the constraint $z_2 = \ol{z_1}$) is the source of the relative advantage of one method over the other, as will be seen below. 
\end{remark}

Here, the \textit{$z_1$-Sylvester resultant} of $P$ and $Q$ is the determinant of the $z_1$-Sylvester matrix of $P$ and $Q$ and will be denoted 
\be
\mathcal{R}_{z_1}(P,Q) = \det \mathcal{S}_{z_1} (P,Q)
\ee
or as $\mathcal{R}$ where the context is unambiguous. It is itself a polynomial in $z_2$ whose degree is bounded by $nt + sm$. \cite[pg 437-438]{PetBook}. 

We say that $P$ and $Q$ are \textit{coprime} if their greatest common factor has degree zero, i.e. $P=CP_1$ and $Q=CQ_2 \implies \deg C = 0$. The following is shown in Proposition 1 of \cite[pg 163]{CLO-IVA}.

\begin{lemma}\label{lem:ZeroRes}
Let $P$ and $Q$ be bivariate polynomials in $z_1$ and $z_2$. Then $\mathcal{R}$ is identically zero if and only if $P$ and $Q$ are not coprime.
\end{lemma}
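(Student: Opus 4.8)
The plan is to identify $\mathcal{R} := \mathcal{R}_{z_1}(P,Q) = \det \mathcal{S}_{z_1}(P,Q)$ as the determinant of the classical Sylvester linear map, translate the vanishing of that determinant into a statement about common factors over the field $K := \CC(z_2)$, and then descend from $K[z_1]$ to $\CC[z_1,z_2]$ via Gauss's lemma. \emph{Step 1: the Sylvester matrix as a linear map.} Regard $P$ and $Q$ as univariate polynomials in $z_1$ with coefficients in $K$, of degrees $n = \deg_{z_1}P$ and $s = \deg_{z_1}Q$. Consider the $K$-linear map $\Phi \colon V \to W$, where $V = \{(A,B) \colon A,B \in K[z_1],\ \deg_{z_1}A < s,\ \deg_{z_1}B < n\}$ and $W = \{R \in K[z_1] \colon \deg_{z_1}R < n+s\}$, given by $\Phi(A,B) = AP + BQ$. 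Both $V$ and $W$ are $(n+s)$-dimensional over $K$, and in the monomial bases the matrix of $\Phi$ coincides, up to transposition and a permutation of rows and columns, with $\mathcal{S}_{z_1}(P,Q)$ from \eqref{eqn:Sylvester}; in particular $\det\Phi$ and $\mathcal{R}$ vanish simultaneously. Hence $\mathcal{R}$, viewed as an element of $K$ (equivalently of $\CC[z_2]$), is the zero polynomial if and only if $\Phi$ is not injective, i.e. if and only if there exist $A,B \in K[z_1]$, not both zero, with $\deg_{z_1}A < s$, $\deg_{z_1}B < n$, and $AP + BQ = 0$.

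\emph{Step 2: common factors over $K$.} Since $K$ is a field, $K[z_1]$ is a PID, and I would show that the existence of such a pair $(A,B)$ is equivalent to $P$ and $Q$ having a common factor of positive degree in $z_1$ over $K$. For the forward implication: $AP = -BQ$ with $(A,B)$ not both zero forces both nonzero (if $A=0$ then $BQ=0$, so $B=0$); writing $d = \gcd(P,Q)$, $P = dP_1$, $Q = dQ_1$ with $\gcd(P_1,Q_1)=1$ and cancelling $d$ gives $AP_1 = -BQ_1$, hence $Q_1 \mid A$, so $\deg_{z_1}Q_1 \le \deg_{z_1}A < s = \deg_{z_1}d + \deg_{z_1}Q_1$, forcing $\deg_{z_1}d \ge 1$. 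Conversely, if $P = dP_1$ and $Q = dQ_1$ with $\deg_{z_1}d \ge 1$, then $(A,B) = (Q_1,-P_1)$ is a nonzero element of $V$ lying in $\ker\Phi$.

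\emph{Step 3: descent to $\CC[z_1,z_2]$, and the one subtlety.} Because $\CC[z_2]$ is a UFD, so is $\CC[z_1,z_2] = \CC[z_2][z_1]$, and Gauss's lemma applies: a common factor of $P$ and $Q$ of positive $z_1$-degree over $K$ can be rescaled to a primitive element of $\CC[z_1,z_2]$ of the same $z_1$-degree that still divides both $P$ and $Q$ in $\CC[z_1,z_2]$, while conversely any common factor in $\CC[z_1,z_2]$ of positive $z_1$-degree is a fortiori one over $K$. Chaining Steps 1--3 yields the equivalence between $\mathcal{R}\equiv 0$ and the existence of a common factor of positive degree in $z_1$. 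The only gap between this and the statement as phrased (``not coprime'' meaning a common factor of positive total degree) is a putative common factor depending on $z_2$ alone; such a factor merely forces $\mathcal{R}$ to be divisible by a power of it and need not make $\mathcal{R}$ vanish identically, so one should either add the standing hypotheses $\deg_{z_1}P \ge 1$, $\deg_{z_1}Q \ge 1$ (reading ``coprime'' as ``no common factor of positive degree in $z_1$'') or --- as holds for every $P$ to which the lemma is applied in this paper, e.g. $P = p(z)\overline{q(z)}-1$ with $\deg p \ge 1$ --- observe directly that neither $P$ nor $Q$ can have a factor depending only on one of the two variables. I expect this bookkeeping about which variable a common factor involves, rather than any substantive difficulty, to be the only real obstacle; everything else is the classical Sylvester computation combined with Gauss's lemma.
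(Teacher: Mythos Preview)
The paper does not supply its own proof of this lemma; it simply cites \cite[pg.~163, Prop.~1]{CLO-IVA}, so there is no in-paper argument to compare against. Your proof is the standard one found in that reference: interpret the Sylvester matrix as the matrix of the linear map $(A,B)\mapsto AP+BQ$ over $K=\CC(z_2)$, characterize non-injectivity as the existence of a common factor of positive $z_1$-degree in $K[z_1]$, and descend to $\CC[z_1,z_2]$ by Gauss's lemma.

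Your observation in Step~3 about the discrepancy between ``common factor of positive $z_1$-degree'' and ``common factor of positive total degree'' is well taken and is genuinely a slight imprecision in the lemma as stated: a common factor $C(z_2)$ depending only on $z_2$ makes every row of $\mathcal{S}_{z_1}(P,Q)$ divisible by $C(z_2)$, hence $C(z_2)^{n+s}\mid \mathcal{R}$, but need not force $\mathcal{R}\equiv 0$. The reference in fact proves exactly the version you obtain (common factor of positive degree in the eliminated variable), and, as you note, this is all that is ever needed in the paper, since for $P(z,\overline{z})=p(z)\overline{q(z)}-1$ with $\deg p,\deg q\ge 1$ any nontrivial common factor of $P$ and $\overline{P}$ must involve both variables. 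So your proof is correct and complete for the lemma as actually used.
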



The next result follows from \cite[Thm. 11.10]{PetBook} and the lemma above.

\begin{thm}[The Resultant Theorem]\label{thm:Res}
Let $P$ and $Q$ be complex polynomials as in \eqref{eqn:bipolyP} and \eqref{eqn:bipolyQ}. Then $z_2$ satisfies $\mathcal{R}(z_2) = 0$ if and only if either $a_n(z_2) = d_s(z_2) = 0$ or there exists some $z_1$ such that $P(z_1,z_2) = Q(z_1,z_2) = 0$. Hence, if $P$ and $Q$ are coprime then the number of values of $z_2$ that appear in the set of common zeros $(z_1,z_2)$ of both $P$ and $Q$ is bounded from above by $nt + ms$.
\end{thm}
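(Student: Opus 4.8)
The plan is to prove Theorem \ref{thm:Res} by combining the classical structure theory of the Sylvester resultant with Lemma \ref{lem:ZeroRes}, treating the two directions separately. For the forward direction, suppose $\mathcal{R}(z_2^0) = 0$ for some fixed $z_2^0 \in \CC$. Substituting $z_2 = z_2^0$ into the Sylvester matrix \eqref{eqn:Sylvester} yields a singular square matrix over $\CC$ whose entries are the numbers $a_i(z_2^0)$ and $d_i(z_2^0)$. Here I would split into cases according to whether the leading coefficients $a_n(z_2^0)$ and $d_s(z_2^0)$ both vanish. If they do, we are in the first alternative of the theorem. If at least one of them is nonzero, then the Sylvester matrix evaluated at $z_2^0$ is the genuine Sylvester matrix of the two univariate polynomials $P(z_1, z_2^0)$ and $Q(z_1, z_2^0)$ in $z_1$ (possibly with one of them having dropped degree, which the classical theory accommodates), and its vanishing determinant forces these univariate polynomials to have a common root $z_1^0$, or one of them to be identically zero. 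Unwinding this gives a point $(z_1^0, z_2^0)$ with $P(z_1^0, z_2^0) = Q(z_1^0, z_2^0) = 0$. This is essentially the content of \cite[Thm. 11.10]{PetBook}, so I would cite it for the detailed bookkeeping rather than reprove it.

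For the converse direction, if $a_n(z_2^0) = d_s(z_2^0) = 0$ then the first column of the Sylvester matrix \eqref{eqn:Sylvester} evaluated at $z_2^0$ is identically zero (both the top block and the bottom block start with these leading coefficients), so the determinant vanishes and $\mathcal{R}(z_2^0) = 0$. If instead there exists $z_1^0$ with $P(z_1^0, z_2^0) = Q(z_1^0, z_2^0) = 0$, then the standard argument applies: the vector $(z_1^{0\,n+s-1}, z_1^{0\,n+s-2}, \ldots, z_1^0, 1)$ lies in the kernel of the transpose of the Sylvester matrix at $z_2^0$ (row $i$ of the product recovers $z_1^{0\,?}$ times $P(z_1^0,z_2^0)$ or $Q(z_1^0,z_2^0)$ depending on the block), so again the determinant vanishes.

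Finally, for the counting statement, assume $P$ and $Q$ are coprime. By Lemma \ref{lem:ZeroRes}, $\mathcal{R}$ is not identically zero, and as noted after the definition of $\mathcal{R}_{z_1}(P,Q)$ its degree in $z_2$ is at most $nt + ms$ \cite[pg 437-438]{PetBook}. Hence $\mathcal{R}$ has at most $nt + ms$ roots in $\CC$. Every $z_2$-coordinate appearing in a common zero $(z_1, z_2)$ of $P$ and $Q$ is, by the equivalence just established, a root of $\mathcal{R}$ — the first alternative $a_n(z_2) = d_s(z_2) = 0$ cannot contribute new values beyond those, since it too forces $\mathcal{R}(z_2) = 0$. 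Therefore there are at most $nt + ms$ such values of $z_2$.

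The main obstacle is the degree-drop subtlety in the forward direction: when $a_n(z_2^0) = 0$ but $d_s(z_2^0) \neq 0$ (or vice versa), the evaluated matrix is no longer literally the Sylvester matrix of two degree-$n$ and degree-$s$ polynomials, and one must check that the determinant still detects a common factor. This is handled cleanly in \cite[Thm. 11.10]{PetBook} by observing that a leading-coefficient zero effectively lowers the degree and the corresponding Sylvester determinant picks up a factor of a power of the other leading coefficient, so the vanishing of $\mathcal{R}(z_2^0)$ still implies a common root unless that other leading coefficient also vanishes — which lands us in the first alternative. Since the excerpt already permits invoking \cite[Thm. 11.10]{PetBook}, I would lean on it here rather than reconstruct the case analysis, keeping the proof short.
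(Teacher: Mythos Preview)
Your proposal is correct and takes essentially the same approach as the paper, which simply records that the theorem ``follows from \cite[Thm. 11.10]{PetBook} and the lemma above'' without supplying further detail. You have fleshed out exactly the two ingredients the paper invokes --- the cited specialization result from \cite{PetBook} for the equivalence and Lemma \ref{lem:ZeroRes} together with the degree bound $\deg \mathcal{R} \leq nt + ms$ for the counting statement --- so there is nothing substantively different to compare.
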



The Resultant Theorem is useful when paired with the additional constraint that the second variable is the conjugate of the first (as will be considered in the next subsection).
We note in passing that in the absence of such a constraint, Theorem \ref{thm:Res} provides no advantage over the Bezout bound.
Indeed, the Bezout bound \cite{sheil-small_2002} on the number of common zeros of $P$ and $Q$ as defined above is 
\be
\#\{(z_1,z_2) \;\: | \:\; P(z_1,z_2) = 0 = Q(z_1,z_2) \} \leq MN.
\ee
The Resultant Theorem, on the other hand, gives
\be
\# \{ z_2 \;\:|\;\: P(z_1, z_2)=0=Q(z_1,z_2) \;\text{for some}\; z_1\}\leq  nt+ms
\ee
or symmetrically: 
\be
\# \{ z_1 \;\:|\;\: P(z_1, z_2)=0=Q(z_1,z_2) \;\text{for some}\; z_2\}\leq sm+tn.
\ee
But since each $z_1$ solution could, a priori, have the full set of $z_2$ solutions as partners, the above two estimates together produce the rather poor estimate 
\be
\# \{ (z_1,z_2) \;\:|\;\: P(z_1, z_2)=0=Q(z_1,z_2)\}\leq (nt+ms)^2,
\ee
which is quite a bit larger than the Bezout bound. 


\subsection{Polynomials in $z$ and $\overline{z}$}\label{sec:polyanalytic}
As in the statement of Theorem \ref{thm:genReduced}, we now consider the additional restriction that $z_2 = \overline{z_1}$, so that $P(z_1,z_2)=P(z,\ol{z})$ is a polyanalytic polynomial, and we also restrict to the case that $Q(z,\ol{z}) = \ol{P(z,\ol{z})}$.


Formally treating $z$ and $\ol{z}$ as separate variables we have $\deg_z P = n = \deg_{\overline{z}} Q$ and $\deg_{\overline{z}} P = m = \deg_z Q$. Let $N=\deg P=\deg Q$. Following \cite[Ch 1.2.4]{sheil-small_2002}, the Bezout bound is then
$$
\#\{(z,\overline{z}) \;\: | \:\; P(z,\overline{z}) = 0 \} \leq N^2. 
$$
Note that $N\leq n+m$ and hence $N^2\leq(n+m)^2$. The corresponding bound provided by Theorem \ref{thm:Res} is
$$
\# \{ z_1 \;\:|\;\: P(z_1, z_2)=0 \;\text{for some}\; z_2 \}\leq m^2+n^2.
$$
But the restriction $z_2 = \overline{z_1}$ means in this case each solution matches with only one partner. In other words:
$$
\# \{ (z,\overline{z}) \;\:|\;\: P(z, \overline{z})=0\}\leq m^2+n^2.
$$
This proves the following lemma:

\begin{lemma}[The Polyanalytic Resultant Lemma]\label{lem:PRL}
Let $P$ be a polyanalytic polynomial of the form as in equation \eqref{eqn:bipolyP}. Assume $P(z,\ol{z})$ and $Q(z,\ol{z}) = \ol{P(z,\ol{z})}$, viewed as bivariate polynomials in $z, \ol{z}$, are coprime. Then the number of zeros of $P(z,\ol{z})$ is bounded above by $n^2 + m^2$.
\end{lemma}

Let us refer to the bound $n^2+m^2$ provided by this lemma as the \emph{Resultant Bound}. Trivially, the resultant bound improves the Bezout bound $N^2$ if and only if
\be\label{eq:P<B}
n^2+m^2 < N^2.
\ee
This necessary and sufficient condition is depicted in Figure \ref{fig:RvsB}.  We note that for the case $P(z,\ol{z}) = p(z) \ol{q(z)} - w$ related to logharmonic polynomials we have $N=n+m$, so that the condition \eqref{eq:P<B} holds for all $m,n \geq 1$.
On the other hand, the condition \eqref{eq:P<B} does not hold for any nontrivial harmonic polynomials, i.e., for polynomials of the form $f(z) + \ol{g(z)}$ having $\deg f, \deg g >0$.

\begin{figure}
  \centering
\includegraphics[width=.4\linewidth]{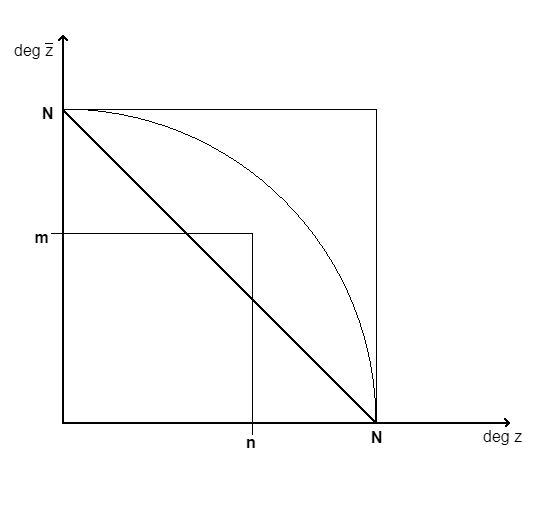}
\includegraphics[width=.4\linewidth]{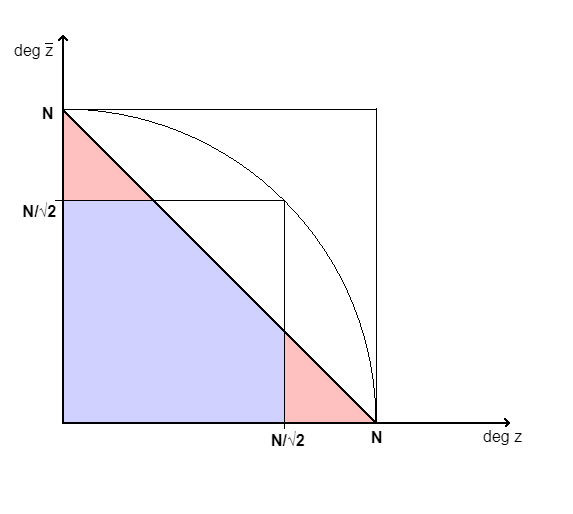}
  \caption{ Left: A depiction of the condition $n^2 + m^2 < N^2$. The monomials of a polyanalytic polynomial $P(z,\overline{z})$ of total degree $N$ correspond to integer lattice points that lie on and within the right triangle. Note that the lattice point $(n,m)$, which is constructed from the maximum degrees in $z$ and $\overline{z}$, need not correspond to an individual monomial. Right: A depiction of a simple sufficient condition $n,m < N/\sqrt{2}$. If the monomials are limited to the pentagonal region, the Resultant Bound is less than the Bezout bound.}
\label{fig:RvsB}
\end{figure}

\subsection{Coprimacy of $P$ and $\overline{P}$ and proof of Theorem \ref{thm:genReduced}}
Let $p$ and $q$ be complex univariate polynomials of degree $n$ and $m$ respectively.
We write these as 
\be
p(z)=\sum_{k=0}^n p_k z^k \;\;\;\text{and} \;\;\; q(z)=\sum_{k=0}^m q_k z^k
\ee
where $p_k,q_k\in \CC$ and $p_n,q_m \neq 0$.


As in the statement of Theorem \ref{thm:genReduced}, let us denote
\be\label{eqn:HenSimp}
P(z,\overline{z}) = p(z)\overline{q(z)} - 1,
\ee
and as in the statement of Lemma \ref{lem:PRL} we denote
\be
Q(z,\overline{z}) = \overline{P(z,\overline{z})} = \overline{p(z)}q(z) - 1.
\ee

In view of Lemma \ref{lem:PRL}, all that remains to prove Theorem \ref{thm:genReduced} is to establish the following lemma ensuring that $P$ and $Q$ are coprime if $p$ is not a constant multiple of $q$.

\begin{lemma}\label{lem:coprime1}
Given two complex polynomials $p$ and $q$, let $P(z,\overline{z}) = p(z)\overline{q(z)} - 1 $ and $Q(z,\ol{z}) = \overline{p(z)}q(z) - 1$. Then either $p(z)= cq(z)$ for some complex constant $c$, or $P$ and $Q$ are coprime.    
\end{lemma}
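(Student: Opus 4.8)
The plan is to prove the contrapositive: suppose $P$ and $Q$ are \emph{not} coprime, i.e.\ they share a common factor $C(z,\ol z)$ of positive total degree, and deduce that $p = cq$ for some constant $c \in \CC$. First I would record some basic structural facts about $P = p(z)\ol{q(z)} - 1$ viewed as a bivariate polynomial in the formal variables $z$ and $w := \ol z$: it factors as $p(z) q^*(w) - 1$ where $q^*(w) := \ol{q_m} w^m + \cdots + \ol{q_0}$ is the polynomial with conjugated coefficients, and likewise $Q = p^*(w) q(z) - 1$. The key elementary observation is that neither $P$ nor $Q$ has any factor depending on only one variable: if $P = A(z) B(z,w)$ then setting $w$ to a root of $q^*$ would force $A(z) \cdot (\text{something}) = -1$, impossible since $p$ is nonconstant; similarly for a factor $A(w)$, using that $p$ is nonconstant. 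So any common factor $C$ genuinely involves both $z$ and $w$, and in particular $\deg_z C \geq 1$ and $\deg_w C \geq 1$.

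Next, the heart of the argument is to exploit the degree bookkeeping. Write $P = C \cdot P_1$ and $Q = C \cdot Q_1$. Comparing degrees in $z$: $\deg_z P = n$ (the $z^n$ term is $p_n q^*(w)$, which is nonzero as a polynomial in $w$), so $\deg_z C + \deg_z P_1 = n$; comparing degrees in $w$: $\deg_w P = m$, so $\deg_w C + \deg_w P_1 = m$. From $Q$: $\deg_z Q = m$ and $\deg_w Q = n$, giving $\deg_z C + \deg_z Q_1 = m$ and $\deg_w C + \deg_w Q_1 = n$. I expect these constraints, combined with the fact that $P$ and $Q$ are ``balanced'' mirror images of each other under swapping $z \leftrightarrow w$ and conjugating coefficients, to pin down the shape of $C$ severely. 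The cleanest route is probably to apply the swap-and-conjugate involution $\sigma$ (sending a bivariate polynomial $R(z,w)$ to $\ol{R}(w,z)$, i.e.\ conjugate coefficients and swap variables): $\sigma$ sends $P$ to $Q$ and $Q$ to $P$, hence sends the set of common factors to itself. So $\sigma(C)$ is also a common factor of $P$ and $Q$; if $C$ is (up to scalar) the full gcd then $\sigma(C) = \lambda C$ for some constant, i.e.\ $C$ is essentially $\sigma$-invariant, which is a strong symmetry constraint.

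The main obstacle, and where the real work lies, is converting the degree/symmetry constraints into the conclusion $p = cq$. Here is the mechanism I would aim for. Consider the total-degree-$N$ homogeneous part of $P$, where $N = \deg P = n + m$: it is exactly $p_n q^*_m \, z^n w^m$ (the unique top term, since $p$ has degree $n$ in $z$ and $q^*$ degree $m$ in $w$, and the $-1$ contributes nothing). Thus the leading forms of $C$ and $P_1$ are monomials $z^a w^b$ and $z^{n-a} w^{m-b}$ respectively. Similarly the top term of $Q$ is $p^*_n q_m z^m w^n$, so the leading form of $C$ (the same $C$) must also be $z^{a'} w^{b'}$ with $a' + b' = \deg C$ and $a' \leq m$, $b' \leq n$; matching gives $a = a'$, $b = b'$, so $a \leq \min(n,m)$ and $b \leq \min(n,m)$. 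Now I would evaluate on the diagonal or use the structure $P(z,w) = p(z)q^*(w) - 1$ directly: a common zero $(z_0, w_0)$ of $P$ and $Q$ satisfies $p(z_0) q^*(w_0) = 1$ and $p^*(w_0) q(z_0) = 1$. The common factor $C$ cuts out a curve's worth of such solutions; taking the conjugate-symmetry into account, along this curve we get $p(z) q^*(w) = 1 = \ol{p^*(w) q(z)}$, and I would try to force $p(z)/q(z)$ to be constant along the curve, then (since the curve has positive dimension and $p/q$ is rational) constant everywhere. Concretely: on the zero set of $C$ we have $p(z) q^*(w) = 1$ and $q(z) p^*(w) = 1$, so $p(z)/q(z) = p^*(w)/q^*(w)$ identically on $\{C = 0\}$; but the left side depends only on $z$ and the right only on $w$, and since $C$ depends nontrivially on both variables its zero set cannot be a union of lines $\{z = \text{const}\} \cup \{w = \text{const}\}$ unless $C$ itself is $z$-only or $w$-only, which we excluded --- hence $p(z)/q(z)$ is forced to be a single constant $c$, giving $p = cq$. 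I would need to handle carefully the possibility that $p$ or $q$ vanishes where we are dividing, and the possibility that $p/q$ is constant only on a \emph{component} of $\{C=0\}$; the cleanest fix is to pass to an irreducible factor of $C$ and argue that a common irreducible factor of $P$ and $Q$ already suffices, so WLOG $C$ is irreducible and $\{C = 0\}$ is an irreducible curve, on which the rational function $p(z)/q(z) - p^*(w)/q^*(w)$ vanishes; being a difference of a function of $z$ alone and a function of $w$ alone that vanishes on an irreducible curve depending on both variables, each piece must separately be constant. That separation-of-variables step is the crux and the part I would write out most carefully.
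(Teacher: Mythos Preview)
Your contrapositive setup and the observation that any common factor $C$ must involve both variables are fine, but the proof founders at exactly the step you flag as the crux. The claim ``on an irreducible curve $\{C=0\}$ depending on both variables, $f(z)=g(w)$ forces $f$ and $g$ to be constant'' is \emph{false}: take $C(z,w)=z-w$, on which $f(z)=f(w)$ holds for \emph{every} rational $f$. Your justification (``the zero set cannot be a union of lines $\{z=\text{const}\}\cup\{w=\text{const}\}$'') conflates two different things: that $C$ has no one-variable factor only rules out \emph{coordinate} lines as components, not graphs $w=\phi(z)$, and on any such graph a nonconstant $f(z)$ happily equals the nonconstant $f(\phi^{-1}(w))$. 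So the separation-of-variables step, as written, does not go through. You do have more to work with than the bare relation $p(z)/q(z)=p^*(w)/q^*(w)$ --- namely the two separate equations $p(z)q^*(w)=1$ and $q(z)p^*(w)=1$, which pin down $p(z)$ and $q(z)$ individually in terms of $w$ --- and it is conceivable that this extra rigidity can be exploited to force $p/q$ constant, but that would require a genuinely new argument you have not supplied.

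For comparison, the paper's proof is entirely different and avoids this issue. It works directly with the Sylvester resultant $\mathcal{R}_z(P,Q)$, which is a polynomial in $\ol z$, and uses the equivalence ``$P,Q$ coprime $\iff$ $\mathcal R_z(P,Q)\not\equiv 0$.'' First (Lemma~\ref{lem:coprime2}) it shows that if $p$ and $q$ have different zero sets, plugging a point $z_0$ with $q(z_0)=0\neq p(z_0)$ into the Sylvester matrix yields a block-triangular matrix with determinant $(-1)^m(q_m\ol{p(z_0)})^n\neq 0$. Then, assuming $\{p=0\}=\{q=0\}$ but $p\neq cq$, it picks a common zero $z_0$ where the vanishing orders $k>\ell$ differ, divides the first $n$ columns of the Sylvester matrix by $(\ol z-\ol{z_0})^k$, and takes the limit $z\to z_0$; the limit is again block-triangular with nonzero determinant. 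The paper's route is thus a concrete determinant evaluation rather than an algebraic-geometry argument on the common-factor curve, and it sidesteps precisely the difficulty that trips up your sketch.
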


The proof of the lemma will use the characterization of coprimacy in terms of resultants provided by Lemma \ref{lem:ZeroRes}.
Note that $P$ and $Q$ may be written
\be
P(z, \overline{z}) = - 1 + \sum_{k=1}^n p_k \overline{q(z)} z^k, \quad \quad
Q(z, \overline{z}) = q_0\overline{p(z)} - 1 + \sum_{k=1}^m q_k\overline{p(z)} z^k,
\ee
where we have assumed, (by precomposing with a translation), $p_0 = 0$.

Having written $P,Q$ as polynomials in $z$ with coefficients depending on $\ol{z}$, recall that the resultant $\mathcal{R}_{z}(P,Q)$ of $P,Q$ is expressed in terms of those coefficients 
as the determinant of the Sylvester matrix \eqref{eqn:Sylvester}:
\be  \label{eqn:BigSyl}
\left[\arraycolsep=3pt\def\arraystretch{1.4}
\begin{array}{cccc|cccc}
p_n\overline{q(z)} & p_{n-1} \overline{q(z)} & \cdots & p_1\overline{q(z)} & -1 & 0 & \cdots & 0 \\
0 & p_n\overline{q(z)} & p_{n-1} \overline{q(z)} & \cdots & p_1\overline{q(z)} & -1 & \cdots & 0 \\
\vdots  &  & \ddots &  & &  & \ddots \\ 
0 & \cdots & 0 & p_n\overline{q(z)} & & \cdots  &  & -1 \\ \hline 
q_m\overline{p(z)} & \cdots & q_1 \ol{p(z)} & q_0\overline{p(z)}-1 & 0 & 0 &\cdots & 0 \\
0 & q_m\overline{p(z)} &  \cdots & q_1 \ol{p(z)} & q_0\overline{p(z)}-1 & 0 & \cdots & 0 \\
\vdots &  & \ddots &  & &  \\
0 & \cdots & 0 & q_m\overline{p(z)} & q_{m-1}\overline{p(z)} & & \cdots & q_0\overline{p(z)}-1 \\
\end{array}
\right].
\ee

To prove Lemma \ref{lem:coprime1}, we first establish the following intermediate step.
\begin{lemma}\label{lem:coprime2}
With $P,Q$ as in the statement of Lemma \ref{lem:coprime1}, if $P$ and $Q$ are not coprime then
\be
\{z\in \CC  :  p(z) = 0\}=\{z\in \CC  :  q(z) = 0\}.
\ee
\end{lemma}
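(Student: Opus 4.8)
The plan is to combine the resultant criterion for coprimacy (Lemma~\ref{lem:ZeroRes}) with the Resultant Theorem (Theorem~\ref{thm:Res}), with the whole argument hinging on a careful choice of the point at which these are applied. Write $p(z)=\sum_{k=0}^n p_kz^k$ and $q(z)=\sum_{k=0}^m q_kz^k$ with $p_n,q_m\ne 0$, and introduce the conjugate-coefficient polynomials $\tilde p(w)=\sum_k\ol{p_k}w^k$ and $\tilde q(w)=\sum_k\ol{q_k}w^k$, so that $\ol{p(z)}=\tilde p(\ol z)$, $\ol{q(z)}=\tilde q(\ol z)$, and, crucially, $\tilde q(w)=0$ exactly when $w=\ol\zeta$ for some root $\zeta$ of $q$ (and symmetrically for $\tilde p$). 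Treating $z$ and $\ol z$ as the separate variables $z_1,z_2$ as in Section~\ref{sec:polyanalytic}, we then have $P(z_1,z_2)=p(z_1)\tilde q(z_2)-1$ and $Q(z_1,z_2)=\tilde p(z_2)q(z_1)-1$, whose leading coefficients as polynomials in $z_1$ are $p_n\tilde q(z_2)$ and $q_m\tilde p(z_2)$ respectively. Since both the non-coprimacy hypothesis and the desired conclusion are symmetric under the interchange $p\leftrightarrow q$ (which just swaps $P$ and $Q$), it suffices to establish the single inclusion $\{q=0\}\subseteq\{p=0\}$.

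For the core step, assume $P$ and $Q$ are not coprime; by Lemma~\ref{lem:ZeroRes} the resultant $\mathcal R:=\mathcal R_{z_1}(P,Q)$ then vanishes \emph{identically} as a polynomial in $z_2$. Let $\zeta$ be any root of $q$ and put $w_0:=\ol\zeta$, so that $\tilde q(w_0)=\ol{q(\zeta)}=0$. Two features of the point $z_2=w_0$ are what make the argument run: first, $P(z_1,w_0)=p(z_1)\cdot 0-1\equiv -1$, so \emph{no} value of $z_1$ satisfies $P(z_1,w_0)=Q(z_1,w_0)=0$; second, the $z_1$-leading coefficient $p_n\tilde q(z_2)$ of $P$ already vanishes at $w_0$. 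Since $\mathcal R(w_0)=0$, Theorem~\ref{thm:Res} applies at $z_2=w_0$ and asserts one of its two alternatives; the one positing a common zero in $z_1$ has just been ruled out, so the other must hold, namely that the $z_1$-leading coefficients of $P$ and $Q$ both vanish at $w_0$. In particular $q_m\tilde p(w_0)=0$, and since $q_m\ne 0$ this gives $\tilde p(w_0)=0$, that is $\ol{p(\zeta)}=0$, hence $p(\zeta)=0$. This proves $\{q=0\}\subseteq\{p=0\}$, and the symmetry noted above then yields the equality of the two zero sets.

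I do not anticipate a single deep obstacle here; the argument is short, and the part requiring actual insight — what I would flag as the crux — is realizing that one should evaluate the resultant precisely at the conjugated roots of $q$: only there does the ``common-zero'' alternative of Theorem~\ref{thm:Res} degenerate into the impossible equation $-1=0$, which is what forces the leading-coefficient alternative and hence produces $p(\zeta)=0$. Two further points need attention. First, one must use that $\mathcal R$ vanishes \emph{identically} (not merely at finitely many points), since the argument requires $\mathcal R(w_0)=0$ at the specific points $w_0=\ol\zeta$. Second, one should separately dispose of the degenerate case in which $p$ or $q$ is constant: then one of $P,Q$ is a nonconstant polynomial in $z$ alone while the other is a nonconstant polynomial in $\ol z$ alone — so the two are coprime and the hypothesis is vacuous — unless both $p$ and $q$ are constant, in which case both zero sets are empty; either way the claim holds. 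It is also worth remarking, when invoking Lemma~\ref{lem:ZeroRes}, that for nonconstant $p$ the polynomial $P$ has no nonconstant factor depending on $z_2$ alone, and symmetrically for $Q$, so that a common factor of positive total degree is automatically a common factor of positive degree in $z_1$, which is what $\mathcal R_{z_1}$ detects.
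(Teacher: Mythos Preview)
Your proof is correct and follows essentially the same approach as the paper: both arguments hinge on evaluating the $z$-resultant at $\ol z=\ol\zeta$ for a root $\zeta$ of $q$, where the vanishing of $\ol{q}$ collapses the situation. The only cosmetic difference is that the paper computes the Sylvester determinant at that point explicitly (obtaining $(-1)^m(q_m\ol{p(z_0)})^n$), whereas you invoke Theorem~\ref{thm:Res} as a black box to reach the same conclusion that $\tilde p(\ol\zeta)$ must vanish; your careful remarks on the degenerate cases and on why a common factor must have positive $z_1$-degree are sound and slightly more explicit than the paper's treatment.
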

\begin{proof}[Proof of Lemma \ref{lem:coprime2}]
Proving the contrapositive, we assume without loss of generality that there exists $z_0 \in \CC$ such that $p(z_0) \neq 0$ while $q(z_0) = 0 $. We then show that the resultant $\mathcal{R}_{z}(P,Q)$ is not identically zero and hence $P$ and $Q$ are coprime by Lemma \ref{lem:ZeroRes}. Substituting $z_0$ into \eqref{eqn:BigSyl} and evaluating, we have a matrix of the form

\be
\left [
\begin{array}{cccc|cccc}
0 & 0 & \cdots & 0 & -1 & 0 & \cdots & 0 \\
0 & 0 & \cdots & 0 & 0 & -1 & \cdots & 0 \\
&\vdots& & & &  & \ddots\\
0 & 0 & \cdots & 0 & 0 & 0& \cdots & -1 \\
\hline 
 & & & & & & &\\
q_m \overline{p(z_0)} & q_{m-1} \overline{p(z_0)}  & \cdots &  & &   & \cdots &  \\
0 & q_{m} \overline{p(z_0)}  & \cdots &  & &   & \cdots &  \\
 \vdots &  & \ddots &  &   & & \cdots  \\
0 & \cdots & 0 & q_m \overline{p(z_0)} & & & \cdots \\
\end{array}
\right ]
\ee
whose determinant is $(-1)^m (q_m\overline{p(z_0)})^n\neq 0.$ Since $\mathcal{R}_{z}(P,Q)$ is not identically zero, it follows from Lemma \ref{lem:ZeroRes} that $P$ and $Q$ are coprime.
\end{proof}

We now prove Lemma \ref{lem:coprime1} (and hence conclude Theorem \ref{thm:genReduced} as explained above).

\begin{proof}[Proof of Lemma \ref{lem:coprime1}]
Suppose that $p$ is not a constant multiple of $q$ and, working toward a contradiction, suppose that $P$ and $Q$ are not coprime. From Lemma \ref{lem:coprime2} we have that $\{p=0\} = \{q=0\}$, but since $p$ is not a constant multiple of $q$ there exists $z_0 \in \{z: p(z) = q(z) = 0\}$ where $p$ and $q$ have distinct order of vanishing, i.e., as $z\to z_0$ \be 
\begin{split}
p(z) &= \alpha (z-z_0)^k + \mathcal{O}((z-z_0)^{(k+1)}), \;\;\text{and}\\ 
q(z) &= \beta (z-z_0)^\ell  + \mathcal{O}((z-z_0)^{(\ell +1 )}),
\end{split} 
\ee
with $k\neq \ell$.  Assume $k > \ell$ without loss of generality. We then have 
\be
    \frac{\mathcal{R}_z(P,Q)}{(\ol{z}-\ol{z_0})^{kn}} = \det
\left [
\begin{array}{cccc|cccc}
 &  &  &  & -1 & 0 & \cdots & 0 \\
 & \mathcal{A} &  &  &  & -1 & \cdots & 0 \\
 &  &  &  &  & &  \ddots \\
 &  &  &  & \mathcal{B} &  &  & -1 \\
\hline 
& & & & & & & \\
\frac{q_m \overline{p(z)}}{(\ol{z}-\ol{z_0})^k} & &  & \mathcal{C} & &   &  &  \\
0 & \frac{q_m \overline{p(z)}}{(\ol{z}-\ol{z_0})^k} &  &  & &   & \mathcal{D} &  \\
0& 0 & \cdots & \frac{q_m \overline{p(z)}}{(\ol{z}-\ol{z_0})^k} & & & \\
\end{array}
\right ]
\ee
where we have multiplied each of the first $n$ columns by $1/(\ol{z}-\ol{z_0})^k$. Note that in submatrix $\mathcal{A}$ the term $\overline{q(z)}/(\ol{z}-\ol{z_0})^k$ appears as a factor in each entry. Using row operations, each entry in triangular component $\mathcal{C}$ may be eliminated (utilizing the nonzero diagonal entries below component $\mathcal{C}$) while only disturbing entries in submatrix $\mathcal{D}$. Letting $z\to z_0$ and using 
\be
\lim_{z\to z_0} \frac{p(z)}{(z-z_0)^k} = \alpha \;\;\; \text{and} \;\;\; \lim_{z\to z_0} \frac{q(z)}{(z-z_0)^k} = 0
\ee
we obtain
\be
   \lim_{z\to z_0} \frac{\mathcal{R}_z(P,Q)}{(\ol{z}-\ol{z_0})^{kn}} = \det
\left [
\begin{array}{cccc|cccc}
0 & 0 & \cdots & 0 & -1 & 0 & \cdots & 0 \\
0 & 0 & \cdots & 0 &  & -1 & \cdots & 0 \\
\vdots &  &  & \vdots &   & & \ddots & \\
0 & 0 & \cdots & 0 & \mathcal{B} &  &  & -1 \\
\hline 
& & & & & & & \\
q_m \ol{\alpha} & 0 & \cdots & 0 &  &   &  &  \\
0 & q_m \ol{\alpha} & \cdots & 0 &  &   & \mathcal{D}^* &  \\
 &  & \ddots &  &   &  \\
0& 0 & \cdots & q_m \ol{\alpha} & & & \\
\end{array}
\right ]
=(-1)^m(q_m \ol{\alpha})^n \neq 0,
\ee
which contradicts the assumption that  $P$ and $Q$ are not coprime and concludes the proof of the lemma.
\end{proof}




\subsection*{Acknowledgments}
The authors are grateful to Walter Bergweiler for producing the images of the basins of attraction shown in Figure \ref{fig:Basins}.  We also thank Alexandre Eremenko for helpful comments.  The first named author acknowledges support from the Simons Foundation (grant 513381), and the second-named author also acknowledges support from the Simons Foundation (grant 712397).

\bibliographystyle{abbrv}
\bibliography{Logharmonic}

\end{document}